\newcommand{\mcZ}{{\mathcal Z}}
\newcommand{\vgo}{\textrm{Vec}_G^\omega}
\newcommand{\vgou}{\textrm{Vec}_G^{\omega^u}}
\newcommand{\U}{{\operatorname{U}}}
\newcommand{\ra}{\rangle}
\newcommand\scalemath[2]{\scalebox{#1}{\mbox{\ensuremath{\displaystyle #2}}}}
\newcommand{\si}{\sigma_i}
\renewcommand{\Vec}{\text{Vec}}
\newcommand{\Z}{{\mathbb Z}}
\newcommand{\R}{{\mathbb R}}
\newcommand{\uno}{ \mathbf{1}}
\newcommand{\C}{{\mathbb{C} }}
\newcommand{\A}{{\mathcal{A} }}
\newcommand{\id}{\mbox{\rm id\,}}
\newcommand{\cB}{\mathcal{B}}
\newcommand{\Rep}{\operatorname{Rep}}
\theoremstyle{plain}
\numberwithin{equation}{section}
\newtheorem{theorem}{Theorem}[section]
\newtheorem{proposition}[theorem]{Proposition}
\theoremstyle{definition}
\newtheorem{definition}[theorem]{Definition}
\newtheorem{example}[theorem]{Example}
\theoremstyle{remark}
\theoremstyle{remark}
\newcounter{commentcounter}
\newcounter{todocounter}
\newcommand{\btikz}[1]{\begin{tikzpicture}[#1]}
\newcommand{\etikz}{\end{tikzpicture}}
\newcommand{\uotwostrandsigmaone}[6]{

\draw[#1] (1,0) node[black,below] {$#4$} to [out=90, in=-90] (2,1.5) node[black,above] {$#4$};

\draw[#2] (2,0) node[black, below] {$#5$} to [out=90,in=-30] (1.65,.65);
\draw[#3] (1.35,.85) to [out=150,in=-90] (1,1.5) node[black,above] {$#6$};

}
\newcommand{\poscross}[4]{\begin{scope}[decoration={markings, mark=at position .999 with {\arrow{>}}}]
\draw[thick, looseness=1.5,postaction={decorate}] (1,0) to [out=90, in=-90] (0,1.5);
\draw[white, line width=7, looseness=1.5] (0,0) to [out=90, in=-90] (1,1.5);
\draw[thick, looseness=1.5,postaction={decorate}] (0,0) to [out=90, in=-90] (1,1.5);
\draw (0,0) node[below] {$#1$};
\draw (1,0) node[below] {$#2$};
\draw (0,1.5) node[above] {$#3$};
\draw (1,1.5) node[above] {$#4$};
\end{scope}
}
\newcommand{\negcross}[4]{\begin{scope}[decoration={markings, mark=at position .999 with {\arrow{>}}}]
\draw[thick, looseness=1.5,postaction={decorate}] (0,0) to [out=90, in=-90] (1,1.5);
\draw[white, line width=7, looseness=1.5] (1,0) to [out=90, in=-90] (0,1.5);
\draw[thick, looseness=1.5,postaction={decorate}] (1,0) to [out=90, in=-90] (0,1.5);
\draw (0,0) node[below] {$#1$};
\draw (1,0) node[below] {$#2$};
\draw (0,1.5) node[above] {$#3$};
\draw (1,1.5) node[above] {$#4$};
\end{scope}
}
\author[Bonderson]{Parsa Bonderson}
\address{Microsoft Station Q, Santa Barbara, CA 93106-6105 USA}
\email{parsab@microsoft.com}
\author[Delaney]{Colleen Delaney}
\address{Department of Mathematics, UC Santa Barbara, Santa Barbara, CA }
\email{cdelaney@math.ucsb.edu}
\author[Galindo]{C\'esar Galindo}
\address{ Departamento de Matem\'aticas, Universidad de los Andes, Bogot\'a, Colombia}
\email{cn.galindo1116@uniandes.edu.co}
\author[Rowell]{Eric C. Rowell}
\address{Department of Mathematics, Texas A$\&$M University, College Station, TX}
\email{rowell@math.tamu.edu}
\author[Tran]{Alan Tran}
\address{Department of Physics, University of California, Santa Barbara, CA}
\email{adtran@physics.ucsb.edu}
\author[Wang]{Zhenghan Wang}
\address{Microsoft Station Q and Department of Mathematics\\University of California\\Santa Barbara, CA 93106}
\email{zhenghwa@microsoft.com,zhenghwa@math.ucsb.edu}
\begin{document}

\usetikzlibrary{math}

\title{On invariants of Modular categories beyond modular data}

\thanks{C.G. is partially supported by Faculty of Science of  Universidad de los Andes, Convocatoria 2018-2019 para la Financiaci\'on de Programas de Investigaci\'on, programa ''Simetr\'{i}a $T$ (inversi\'on temporal) en
categor\'{i}as de fusi\'{o}n y modulares''. E.C.R is partially supported by NSF grant DMS-1802145. Z.W. is partially supported by NSF grants DMS-1411212 and  FRG-1664351.}
\begin{abstract}
We study novel invariants of modular categories that are beyond the modular data, with an eye towards a simple set of complete invariants for modular categories.
Our focus is on the $W$-matrix---the quantum invariant of a colored framed Whitehead link from the associated TQFT of a modular category.  We prove that the $W$-matrix and the set of punctured $S$-matrices are strictly beyond the modular data $(S,T)$.  Whether or not the triple $(S,T,W)$ constitutes a complete invariant of modular categories remains an open question.
\end{abstract}

\subjclass[2000]{16W30, 18D10, 19D23}

\date{\today}
\maketitle

\section{Introduction}

The classification of modular categories, called anyon models in physics if unitary, is an interesting and important problem both in mathematics and physics \cite{TuBook,ZW10,RW18}. Hence, a complete, yet simple set of invariants for modular categories is highly desirable.  It had been conjectured for some time that the modular data is such a candidate (e.g. \cite{RW18}), but recently counterexamples to this conjecture appeared \cite{Mignard-Shauenburg}.  This motivates the question: what additional invariants of modular categories would elevate modular data to a complete invariant? In this paper, we study link invariants as new invariants of modular categories and focus on one such invariant, called the $W$-matrix---a symmetric version of the quantum invariant of the Whitehead link.  The $W$-matrix is strictly beyond the modular data because it provides a new verification of the Mignard-Shauenburg counterexamples.  We do not know whether the modular data supplemented with $W$ is complete or not.

One complete set of data of modular categories comes from their skeletonization using $F$ and $R$ symbols (e.g. see \cite{DHW, Parsathesis}).  Unlike the modular data, the set of $F$ and $R$ symbols are not intrinsic because they depend on some gauge choices when their consistency pentagon and hexagon equations are solved. Geometric invariant theory can be used to show that for multiplicity-free modular categories a finite set of polynomial combinations of the $F$ and $R$ symbols gives rise to an intrinsic complete invariant, but the existence proof does not provide any practical formulas in general \cite{hagge15,MTthesis}.  However, intrinsic invariants of modular categories are easy to come by, and can be expressed as certain combinations of $F$ and $R$ symbols that are independent of gauge choices.  One example is $\sum_{\mu\nu}[F^{a,b,a}_{b}]_{b\mu\nu,b\nu\mu}$ for any fusion category (with $N_{ab}^{b}=N_{ba}^{b} \neq 0$).

A modular category is essentially the $2$-dimensional part of a $(2+1)D$-TQFT $(V,Z)$---called the modular functor $V$ (e.g. \cite{TuBook,RW18}).  
Since every modular category $\mathcal{B}$ leads to a $(2+1)D$-TQFT $(V,Z)$ \cite{TuBook}, every pair $(M, L)$, where $L$ is a colored framed oriented link in a $3$-manifold $M$, gives an invariant $Z(M,L)$ of modular categories.  The un-normalized modular data is the case in which $M$ is the $3$-sphere and $L$ is the Hopf link or $+1$-framed unknot, respectively.  Physically, it is almost a tautology that the set of all such invariants $Z(M,L)$ should determine a modular category uniquely.  Since each $3$-manifold $M$ can be obtained by surgery on a link in $S^3$, we may assume $M=S^3$ without loss of generality.  We speculate that a finite set $\{Z(S^3,L_i)\}$ of links $L_i\subset S^3$ would provide a complete invariant of modular categories.  In this paper, we search for knots and links with increasing complexity to see if their invariants go beyond the modular data.  We prove that all invariants of $2$-braid closures are determined by the modular data.  On the other hand, we find that the Whitehead link provides an invariant that is beyond the modular data.  We verify this using the Mignard-Shauenburg counterexamples \cite{Mignard-Shauenburg}.

It is almost certain that all modular functors satisfy the Grothendieck reconstruction principle \cite{luo99}.  If so, then the data of a modular category should be supported on punctured spheres up to $4$ punctures, the torus, and the once-punctured torus with consistency relations supported on the $5$-punctured sphere and twice-punctured torus.  Therefore, the first obvious place to look for invariants of modular categories beyond modular data would be the punctured $S$-matrices.  Unlike the $S$-matrix, the punctured $S$-matrices are generally not intrinsic. However, the trace of the diagonal components of the punctured $S$-matrices are intrinsic. Moreover, these intrinsic objects are closely related to the Whitehead link invariants, which we will focus on instead.

The representation category of a Drinfeld double $D^\omega(G)$ is a unitary modular category \cite{DPR90,AC92}.  Such modular categories probably comprise most of the modular categories because the number of such modular categories for a fixed rank grows faster than any polynomial of the rank \cite[Remark 4.5]{rankfinite16}.  The Mignard-Shauenburg counterexamples are also of the form $\Rep(D^\omega (G))$.  Therefore, it would be interesting to test on those modular categories whether or not the triple $(S,T,W)$ would be sufficient to classify them.

The contents of the paper are as follows.  In Sect. 2, we discuss some new invariants of modular categories.  In Sect. 3, we provide an exposition on computing operator invariants of colored braids from the twisted doubles $D^\omega(G)$. These representations of colored braid groups are not the topological anyonic representations that model anyon statistics, but rather are explicitly local and quasi-localizations of the anyonic representations.  In Sect. 4, we compute the $W$-matrix for the simplest Mignard-Shauenburg counterexample.  In Sect. 5, we comment on a systematic search of link invariants that go beyond modular data.

\section{Invariants of Modular Categories}

As explained in the introduction, the most obvious data that might go beyond the modular data would be the intrinsic information contained in the punctured S-matrices. This is, indeed, true, as our results will demonstrate. However, such data is difficult to access without the full $F$ and $R$ symbols of a modular category.  In principle, any link invariant could be equally proposed as a new invariant for modular categories. Since link invariants of two-strand braid closures are determined by the modular data, the Whitehead link is a good candidate because its minimum representative braid word has the shortest possible length among three-strand braid words representing oriented knots and links \cite{gittings}. More importantly, we focus on the Whitehead link due to the close relation between its colored link invariants and the punctured $S$-matrices.

In the following, we use both the language of tensor category theory and anyon model.  For a correspondence of terminologies, see e.g. \cite[Table 1]{RW18} or \cite{ZW10}.  We also use the convention that for an anyon $X$, $x$ denotes its isomorphism class or topological charge or anyon type.

\subsection{Punctured $S$-matrices and $W$-matrix}

\subsubsection{Punctured $S$-matrices}

The punctured $S$-matrix generalizes the modular $S$-matrix to that of a torus with a boundary carrying a topological charge (the modular S-matrix corresponding to the trivial charge). 
Given a modular category $\cB$ and a simple object $X$ of $\cB$, the associated $(2+1)D$-TQFT of $\cB$ leads to a projective representation $V(T^2_0;X)$ of the mapping class group of the colored one-hole torus $T^2_0$, where the hole is colored by the simple object $X$.  When a particular basis of $V(T^2_0;X)$ is chosen, an oriented mapping class that exchanges the meridian and longitude gives rise to a matrix, which will be called the punctured $S$-matrix $S^{(x)}=\left(S^{(x)}_{(a\mu)(b\nu)} \right)$, where $x$ denotes the isomorphism class of $X$.  It is obvious that the punctured $S$-matrix $S^{(x)}$ in general depends on the choices of the basis of $V(T^2_0;X)$.  A basis of $V(T^2_0;X)$ is chosen so that $S^{(x)}$ is the following in graphical calculus.

$$S^{(x)}_{(a\mu)(b\nu)}=\frac{1}{D\sqrt{d_x}}
\begin{tikzpicture}[baseline=0, thick,scale=.5, shift={(0,-4.8)}]

	
	\begin{scope}[decoration={markings, mark=at position 0.35 with {\arrow{<}}}]
		\draw[postaction={decorate}] (2, 5+1.25) arc (90:360:1.25cm);
		\draw (2, 5+1.25) arc (90:45:1.25cm);
		\draw (2+1.25, 5) arc (0:30:1.25cm);
		\draw (2-1.25,5) node[left] {\small $a$};
	\end{scope}
	
	\begin{scope}[decoration={markings, mark=at position 0.005 with {\arrow{>}}}]
		\draw[postaction={decorate}] (4-1.25, 5) arc (180:-90:1.25cm);
		\draw (4, 5-1.25) arc (270:225:1.25cm);
		\draw (4-1.25, 5) arc (180:210:1.25cm);
		\draw (4-1.25,5) node[left] {\small $b$};
	\end{scope}
	
	\begin{scope}[decoration={markings, mark=at position 0.5 with {\arrow{>}}}]
		\draw[black,looseness=1.3] (2,5-1.25) to [out=-90, in=-100] (4+1+0.95, 5-1); 
		\draw[black,looseness=1.6] (4,5+1.25) to [out=90, in=100] (4+1+1, 5+1.3)  ;
		\draw[black,looseness=0.4, postaction={decorate}] (4+1+0.95, 5-1) to [out=180-100,in=100-180] (4+1+1,5+1.3);
		\draw (4+2.1,5) node[right] {\small $x$};
		\draw (2.1,5-1.5) node[left] {\tiny $\mu$};
		\draw (4.1,5+1.4) node[left] {\tiny $\nu$};
	\end{scope}
\end{tikzpicture}
$$

It follows that the eigenvalues of each punctured $S$-matrix $S^{(x)}$ are intrinsic data of the modular category $\cB$, in particular the trace and determinant of $S^{(x)}$.  We use the graphical calculus for modular categories to compute below (see \cite{TuBook, ZW10, Parsathesis}).

\subsubsection{$W$-matrix}

The Whitehead link is a two-component link\footnote{The Whitehead link was first discovered by J.~Maxwell as an example of a non-trivial link with linking number=$0$ given by Gauss's formula.  It is called the Whitehead link because it was later used by J.H.C.~Whitehead to construct the Whitehead continuum---a contractible $3$-manifold that is not homeomorphic to ${\R}^3$.} that is not equivalent to its mirror.  When the distinction is important, the following is the one we refer to as the Whitehead link:


$$
\begin{tikzpicture}[baseline=0, thick,scale=.33, shift={(0,-4.8)}]


\begin{scope}[decoration={markings, mark=at position 0.5 with {\arrow{>}}}, evaluate={
	\l = 1;
	\cut=5;
},]

\draw [looseness=\l](2.,6.75) to [out=180, in=90] (1.25,4.75);
\draw [looseness=\l,white,line width=\cut](1.5,6) to [out=180, in=-90] (0.75,7.5);
\draw [looseness=\l](1.5,6) to [out=180, in=-90] (0.75,7.5);

\draw [looseness=\l](1.5,6) to [out=0, in=-90] (2.25,7.5);

\draw [looseness=\l,white,line width=\cut](2,6.75) to [out=0, in=90 ] (2.75,4.75);
\draw [looseness=\l](2.,6.75) to [out=0, in=90] (2.75,4.75);

\draw (2.75,4.75) -- (2.75,2.0);
\draw (1.25,4.75) -- (1.25,2.0);
\draw (2.75,2.0) to [out=-90, in=-90] (5,2.0);
\draw (1.25,2.0) to [out=-90, in=-90] (6.5,2.0);

\draw[postaction=decorate] (0.75,7.5) -- (0.75,8);
\draw (2.25,7.5) -- (2.25,8);
\draw (2.25,8) to [out=90, in=90] (5,8);
\draw (0.75,8) to [out=90, in=90] (6.5,8);

\draw (5,2) -- (5,8);
\draw (6.5,2) -- (6.5,8);
\end{scope}

\begin{scope}[decoration={markings, mark=at position 0.56 with {\arrow{<}}}, evaluate={
	\l = 1.2;
	\cut=5;
},]
\draw[white, line width=\cut] (2,4) ellipse (1.75cm and 0.5cm);
\draw[postaction={decorate}] (2,4) ellipse (1.75cm and 0.5cm);
\draw[white, line width=\cut] (1.25,5) -- (1.25,4);
\draw[white, line width=\cut] (2.75,5) -- (2.75,4);
\draw (1.25,5) -- (1.25,4);
\draw (2.75,5) -- (2.75,4);
\end{scope}

\end{tikzpicture}
$$
The orientation of each link component is unimportant, as the four choices of orientations are equivalent under isotopy, as we will demonstrate.

Given a complete representative set $\Pi_{\cB}$ of simple objects of the modular category $\cB$,  $\widetilde{W}_{ab}$ is defined as the following colored oriented framed link invariant:


$$
\widetilde{W}_{ab}= 
\begin{tikzpicture}[baseline=0, thick,scale=.33, shift={(0,-4.8)}]


\begin{scope}[decoration={markings, mark=at position 0.5 with {\arrow{>}}}, evaluate={
	\l = 1;
	\cut=5;
},]

\draw [looseness=\l](2.,6.75) to [out=180, in=90] (1.25,4.75);
\draw [looseness=\l,white,line width=\cut](1.5,6) to [out=180, in=-90] (0.75,7.5);
\draw [looseness=\l](1.5,6) to [out=180, in=-90] (0.75,7.5);

\draw [looseness=\l](1.5,6) to [out=0, in=-90] (2.25,7.5);

\draw [looseness=\l,white,line width=\cut](2,6.75) to [out=0, in=90 ] (2.75,4.75);
\draw [looseness=\l](2.,6.75) to [out=0, in=90] (2.75,4.75);

\draw (2.75,4.75) -- (2.75,2.0);
\draw (1.25,4.75) -- (1.25,2.0);
\draw (2.75,2.0) to [out=-90, in=-90] (5,2.0);
\draw (1.25,2.0) to [out=-90, in=-90] (6.5,2.0);

\draw[postaction=decorate] (0.75,7.5) -- (0.75,8);
\draw (0.75,8) node[left] {\small $a$};
\draw (2.25,7.5) -- (2.25,8);
\draw (2.25,8) to [out=90, in=90] (5,8);
\draw (0.75,8) to [out=90, in=90] (6.5,8);

\draw (5,2) -- (5,8);
\draw (6.5,2) -- (6.5,8);
\end{scope}

\begin{scope}[decoration={markings, mark=at position 0.56 with {\arrow{<}}}, evaluate={
	\l = 1.2;
	\cut=5;
},]
\draw[white, line width=\cut] (2,4) ellipse (1.75cm and 0.5cm);
\draw[postaction={decorate}] (2,4) ellipse (1.75cm and 0.5cm);
\draw (0.5,3.6) node[below] {\small $b$};
\draw[white, line width=\cut] (1.25,5) -- (1.25,4);
\draw[white, line width=\cut] (2.75,5) -- (2.75,4);
\draw (1.25,5) -- (1.25,4);
\draw (2.75,5) -- (2.75,4);
\end{scope}

\end{tikzpicture}
$$

Even though the Whitehead link is symmetric as a link, it is not symmetric as an oriented framed link.  Hence, the matrix $\widetilde{W}=(\widetilde{W}_{ab})$ is not symmetric as type-I Reidemeister moves are used in the process of exchanging the two components.  We can, however, define a symmetric matrix $W=(W_{ab})$ from $\widetilde{W}_{ab}$ using twist factors as follows.

\begin{definition}
Let $W_{ab}=\frac{\theta_a}{\theta_b}\widetilde{W}_{ab}$ for any pair $a,b\in \Pi_{\cB}$.  Then the matrix $W=(W_{ab})_{a,b \in \Pi_{\cB}}$ will be called the $W$-matrix of the modular category $\cB$.
\end{definition}

\begin{proposition}\label{Wmatrixproperties}
The $W$-matrix is symmetric and determined by the punctured $S$-matrices together with the modular data.

\begin{enumerate}
    \item $$\theta_a^2\widetilde{W}_{ax}=\theta_x^2\widetilde{W}_{x{\bar{a}}}$$.
    \item $$\widetilde{W}_{ax}=\widetilde{W}_{a\bar{x}}.$$
\item $$\sum^{N^z_{a\bar{a}}}_{\mu=1}S^{(z)}_{(a\mu)(a\mu)}=\frac{d_a}{\theta_a D^2}\sum_x S_{zx} \theta_{x} W_{ax}.$$
\item $$W_{ab}=\frac{\theta_a D^2}{\theta_b d_a}\sum_{x,\mu} S_{bx}^* S^{(x)}_{(a\mu)(a\mu)}.$$
\end{enumerate}

\end{proposition}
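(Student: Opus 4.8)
The plan is to treat the four identities as two genuinely different tasks. Identities (3) and (4) are equivalent to one another: assuming (3), one multiplies by $S^*_{zb}$ and sums over $z$, and the unitarity relation $\sum_z \overline{S_{zb}}\,S_{zx}=\delta_{bx}$ together with the symmetry $S_{xb}=S_{bx}$ collapses the right-hand side to $\frac{\theta_b d_a}{\theta_a D^2}W_{ab}$, which is exactly (4); the reverse implication is the same manipulation run backwards. So only one of (3), (4) needs a direct proof, and I would prove it by graphical calculus. The remaining assertions—(1), (2), and the symmetry of $W$ itself—I would extract from isotopies of the oriented framed link, so that the word ``symmetric'' in the statement becomes a formal consequence of them.

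For (2) I would exhibit an ambient, framing-preserving isotopy of $S^3$ that reverses the orientation of the $x=b$-component while fixing the $a$-component; since reversing the orientation of a strand colored $x$ is the same as recoloring it by $\bar x$, and the unoriented Whitehead link is carried to itself (one of the four orientation choices declared equivalent in the text), this gives $\widetilde W_{ax}=\widetilde W_{a\bar x}$ with no twist correction. For (1) I would instead use the isotopy exchanging the two components; this exchange is not framing preserving and, as noted just before the statement, requires type-I Reidemeister moves, so I would track the writhe change on each strand, read off the compensating twist factors, and combine them with the orientation reversal turning $a$ into $\bar a$ to obtain $\theta_a^2\widetilde W_{ax}=\theta_x^2\widetilde W_{x\bar a}$. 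The symmetry of $W$ is then purely formal: combining (1) and (2) gives $\widetilde W_{xa}=(\theta_a^2/\theta_x^2)\widetilde W_{ax}$, and feeding this into $W_{xa}=(\theta_x/\theta_a)\widetilde W_{xa}$ returns $(\theta_a/\theta_x)\widetilde W_{ax}=W_{ax}$.

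The heart of the argument is the direct graphical evaluation of, say, (4). Here I would view the $a$-component as a strand passing through the disk bounded by the $b$-circle as two oppositely oriented parallel strands (carrying $a$ and $\bar a$) joined by the clasp. Inserting a complete set of fusion channels $a\otimes\bar a=\bigoplus_x\bigoplus_{\mu=1}^{N^x_{a\bar a}} x$ along the parallel segment splits the diagram into two recognizable pieces: the $b$-circle now encircles the single $x$-channel, contributing a Hopf-link/monodromy factor proportional to $S^*_{bx}$ (the conjugate, rather than $S_{bx}$, being forced by the chirality of this particular Whitehead link as opposed to its mirror), while the clasp closing the $a$- and $\bar a$-strands through the channel $x$ with vertex $\mu$ is, after an isotopy recognizing the relevant Heegaard torus, precisely the diagonal punctured $S$-matrix element $S^{(x)}_{(a\mu)(a\mu)}$. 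Assembling the normalization constants $D$, $d_a$, $d_x$ and the twist factors produced by straightening framings, and passing from $\widetilde W$ to $W$ via the definition $W_{ab}=\frac{\theta_a}{\theta_b}\widetilde W_{ab}$, yields (4); then (3) follows by the $S$-orthogonality computation of the first paragraph.

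The main obstacle is exactly this last graphical reduction. Matching the resolved Whitehead diagram to the definition of $S^{(x)}$ requires arranging the two fusion vertices to coincide so that the sum over $\mu$ is a genuine partial trace on $\Hom(a\otimes\bar a,x)$, tracking every factor of $\sqrt{d_x}$ and $1/D$ hidden in the punctured $S$-matrix normalization, and correctly bookkeeping the duals ($a$ versus $\bar a$) and the chirality that distinguishes $S_{bx}$ from $S^*_{bx}$. I would control this by first isotoping the whole diagram into blackboard-framing standard form, so that all twist factors are produced in a single, clearly identified step at the very end rather than being scattered through the computation, and by verifying the channel-resolution move against the defining picture of $S^{(x)}_{(a\mu)(b\nu)}$ one crossing at a time.
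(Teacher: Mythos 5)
Your proposal is correct and follows essentially the same route as the paper: items (1) and (2) via isotopies of the oriented framed Whitehead link with twist-factor bookkeeping, the symmetry of $W$ as a formal consequence of (1), (2) and the definition, and the equivalence of (3) and (4) via unitarity of the $S$-matrix. The only cosmetic difference is the direction of the core graphical computation: you resolve the Whitehead link by inserting a complete set of fusion channels on the doubled $a$-strand to arrive at $\sum_{\mu} S^{(x)}_{(a\mu)(a\mu)}$ weighted by $S^*_{bx}$ (proving (4) first), whereas the paper starts from $\sum_{\mu} S^{(z)}_{(a\mu)(a\mu)}$, closes the two vertices into a loop carrying the Kirby color $\omega_{\bar z}$, and expands that loop to recover the Whitehead invariants (proving (3) first) --- the same identity read in opposite directions.
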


\begin{proof}
(1):
\begin{align*}
\widetilde{W}_{ax}
&= 
\begin{tikzpicture}[baseline=0, thick,scale=.33, shift={(0,-4.8)}]
%
%
\begin{scope}[decoration={markings, mark=at position 0.5 with {\arrow{>}}}, evaluate={
	\l = 1;
	\cut=5;
},]
\draw [looseness=\l](2.,6.75) to [out=180, in=90] (1.25,4.75);
\draw [looseness=\l,white,line width=\cut](1.5,6) to [out=180, in=-90] (0.75,7.5);
\draw [looseness=\l](1.5,6) to [out=180, in=-90] (0.75,7.5);
\draw [looseness=\l](1.5,6) to [out=0, in=-90] (2.25,7.5);
\draw [looseness=\l,white,line width=\cut](2,6.75) to [out=0, in=90 ] (2.75,4.75);
\draw [looseness=\l](2.,6.75) to [out=0, in=90] (2.75,4.75);
\draw (2.75,4.75) -- (2.75,2.0);
\draw (1.25,4.75) -- (1.25,2.0);
\draw (2.75,2.0) to [out=-90, in=-90] (5,2.0);
\draw (1.25,2.0) to [out=-90, in=-90] (6.5,2.0);
\draw[postaction=decorate] (0.75,7.5) -- (0.75,8);
\draw (0.75,8) node[left] {\small $a$};
\draw (2.25,7.5) -- (2.25,8);
\draw (2.25,8) to [out=90, in=90] (5,8);
\draw (0.75,8) to [out=90, in=90] (6.5,8);
\draw (5,2) -- (5,8);
\draw (6.5,2) -- (6.5,8);
\end{scope}
\begin{scope}[decoration={markings, mark=at position 0.56 with {\arrow{<}}}, evaluate={
	\l = 1.2;
	\cut=5;
},]
\draw[white, line width=\cut] (2,4) ellipse (1.75cm and 0.5cm);
\draw[postaction={decorate}] (2,4) ellipse (1.75cm and 0.5cm);
\draw (0.5,3.7) node[below] {\small $x$};
\draw[white, line width=\cut] (1.25,5) -- (1.25,4);
\draw[white, line width=\cut] (2.75,5) -- (2.75,4);
\draw (1.25,5) -- (1.25,4);
\draw (2.75,5) -- (2.75,4);
\end{scope}
\end{tikzpicture}
=
\begin{tikzpicture}[baseline=0, thick,scale=.33, shift={(0,-4.8)}]
%
%
\begin{scope}[decoration={markings, mark=at position 0.5 with {\arrow{>}}}, evaluate={
	\l = 1;
	\cut=4;
},]
\draw [looseness=\l](2.,6.75) to [out=180, in=90] (1.25,4.75);
\draw [looseness=\l,white,line width=\cut](1.5,6) to [out=180, in=-90] (0.75,7.5);
\draw [looseness=\l](1.5,6) to [out=180, in=-90] (0.75,7.5);
\draw [looseness=\l](1.5,6) to [out=0, in=-90] (2.25,7.5);
\draw [looseness=\l,white,line width=\cut](2,6.75) to [out=0, in=90 ] (2.75,4.75);
\draw [looseness=\l](2.,6.75) to [out=0, in=90] (2.75,4.75);
\draw (2.75,4.75) -- (2.75,2.0);
\draw (1.25,4.75) -- (1.25,2.0);
\draw (2.75,2.0) to [out=-90, in=-90] (5,2.0);
\draw (1.25,2.0) to [out=-90, in=-90] (6.5,2.0);
\draw[postaction=decorate] (0.75,7.5) -- (0.75,8);
\draw (0.75,8) node[left] {\small $a$};
\draw (2.25,7.5) -- (2.25,8);
\draw (2.25,8) to [out=90, in=90] (5,8);
\draw (0.75,8) to [out=90, in=90] (6.5,8);
\draw (5,2) -- (5,8);
\draw (6.5,2) -- (6.5,8);
\end{scope}
\begin{scope}[decoration={markings, mark=at position 0.33 with {\arrow{>}}}, evaluate={
	\l = 1.2;
	\cut=4;
},]
\draw[white, line width = \cut, looseness=0.8] (1.7, 6.3) to [out=90, in=180] (3.0,8);
\draw[black,looseness=0.8] (1.7, 6.3) to [out=90, in=180] (3.0,8);
\draw [white, line width = \cut] (2.25,7.40) --  (2.25,8);
\draw [black](2.25,7.2) -- (2.25,8);
\draw[black,looseness=0.8] (3.0,8) to [out=0, in=90] (3.5, 6.4) ;
\draw[white, looseness=0.8, line width = \cut]  (3.5, 6.4)to [out=-90, in=0] (2.75,3.0) ;
\draw[white,line width=\cut,looseness=0.8]  (3.5, 6.4)to [out=-90, in=0] (2.7,3.0) ;
\draw[black,looseness=0.8]  (3.5, 6.4)to [out=-90, in=0] (2.75,3.0) ;
\draw[white, looseness=0.8, line width = \cut](2.75,3.0) to [out=180, in=-90] (1.7,5.8);
\draw[black,looseness=0.8,postaction=decorate](2.75,3.0) to [out=180, in=-90] (1.7,5.8);
\draw (2.75,2.8) node[left] {\small $x$};
\end{scope}
\end{tikzpicture}
\\
&=
\theta_a^{-1}
\begin{tikzpicture}[baseline=0, thick,scale=.33, shift={(0,-4.8)}]
%
%
\begin{scope}[decoration={markings, mark=at position 0.99 with {\arrow{>}}}, evaluate={
	\l = 1;
	\cut=5;
},]
\draw (5,8) to [out=0, in=90] (7,5);
\draw [postaction=decorate](5,2) to [out=180, in=-90] (3,5);
\draw (2.35,4.8) node[] {\small $x$};
\draw [white, line width=\cut] (1,1) to [out=0, in=180] (9,9);
\draw (1,1) to [out=0, in=180] (9,9);
\draw[white, line width=\cut] (1,9) to [out=0, in=180] (9,1);
\draw (1,9) to [out=0, in=180] (9,1);
\end{scope}
\begin{scope}[decoration={markings, mark=at position 0.5 with {\arrow{<}}}, evaluate={
	\l = 1;
	\cut=5;
},]
\draw [looseness=0.5,postaction=decorate](1,9) to [out=180, in=180] (1,1);
\draw (-0.1,5) node[right] {\small $a$};
\draw [looseness=0.5](9,1) to [out=0, in=0] (9,9);
\draw [white, line width=\cut] (5,8) to [out=-180, in=90] (3,5);
\draw (5,8) to [out=-180, in=90] (3,5);
\draw [white, line width=\cut] (5,2) to [out=0, in=-90] (7,5);
\draw (5,2) to [out=0, in=-90] (7,5);
\end{scope}
\end{tikzpicture}
=
\theta_a^{-1}\theta_x
\begin{tikzpicture}[baseline=0, thick,scale=.33, shift={(0,-4.8)}]
%
\draw (0.5,8) to [out=90, in=90]  (7.5,7.5);
\begin{scope}[decoration={markings, mark=at position 0.99 with {\arrow{>}}}, evaluate={
	\l = 1;
	\cut=5;
},]
\draw [looseness=0.4](0.5, 2.0) to [out=90,in=180] (3.5, 5.5);
\draw [white, line width=\cut] (2,2) to [out=0, in=180] (8,8);
\draw (2,2) to [out=0, in=180] (8,8);
\draw[white, line width=\cut] (2,8) to [out=0, in=180] (8,2);
\draw(2,8) to [out=0, in=180] (8,2);
\end{scope}
\begin{scope}[decoration={markings, mark=at position 0.65 with {\arrow{<}}}, evaluate={
	\l = 1;
	\cut=5;
},]
\draw [looseness=0.5, white, line width=\cut](2,8) to [out=180, in=180] (2,2);
\draw [looseness=0.5,postaction=decorate](2,8) to [out=180, in=180] (2,2);
\draw [looseness=0.5](8,2) to [out=0, in=0] (8,8);
\end{scope}
\begin{scope}[decoration={markings, mark=at position 0.1 with {\arrow{<}}}, evaluate={
	\l = 1;
	\cut=5;
},]
\draw [looseness=0.4,white, line width=\cut](0.5, 8.0) to [out=-90,in=180] (3.5, 4.5);
\draw [looseness=0.4, postaction=decorate](0.5, 8.0) to [out=-90,in=180] (3.5, 4.5);
\draw (3.5, 4.5) to [out=0,in=0] (3.5, 5.5);
\draw [white, line width=\cut]((0.5,2) to [out=-90, in=-90]  (7.5,2.5);
\draw (0.5,2) to [out=-90, in=-90]  (7.5,2.5);
\draw (7.5,2.5) -- (7.5, 7.5);
\end{scope}
\draw (1.0,4) node[left] {\small $a$};
\draw (0.7,7.5) node[left] {\small $x$};
\end{tikzpicture}
\\
&=
\theta_a^{-1}\theta_x
\begin{tikzpicture}[baseline=0, thick,scale=.33, shift={(0,-4.8)}]
%
\draw [looseness=0.7](1,6.5) to [out=0,in=90] (2.0,5);
\draw [looseness=1.1](0,6.5) to [out=90, in=90] (7,6.5);
\begin{scope}[decoration={markings, mark=at position 0.20 with {\arrow{<}}}, evaluate={
	\l = 1;
	\cut=5;
},]
\draw[white, line width = \cut] (2,3) to [out=0, in=180] (8,7);
\draw [postaction=decorate](2,3) to [out=0, in=180] (8,7);
\draw[white, line width=\cut] (2,7) to [out=0, in=180] (8,3);
\draw(2,7) to [out=0, in=180] (8,3);
\end{scope}
\begin{scope}[decoration={markings, mark=at position 0.65 with {\arrow{<}}}, evaluate={
	\l = 1;
	\cut=5;
},]
\draw [looseness=0.5, white, line width=\cut](2,7) to [out=180, in=180] (2,3);
\draw [looseness=0.5](2,7) to [out=180, in=180] (2,3);
\draw [looseness=0.5](8,3) to [out=0, in=0] (8,7);
\end{scope}
\begin{scope}[decoration={markings, mark=at position 0.00 with {\arrow{<}}}, evaluate={
	\l = 1;
	\cut=5;
},]
\draw [white, line width=\cut, looseness=0.7](1,3.5) to [out=0,in=-90] (2.0,5);
\draw [looseness=0.7](1,3.5) to [out=0,in=-90] (2.0,5);
\draw [looseness=0.5](1,6.5) to [out=180,in=90] (0,3.5);
\draw [white, line width=\cut, looseness=0.5](1,3.5) to [out=180,in=-90] (0,6.5);
\draw [looseness=0.5](1,3.5) to [out=180,in=-90] (0,6.5);
\draw [white, line width=\cut, looseness=1.1](0,3.5) to [out=-90,in=-90] (7,3.5);
\draw [postaction=decorate, looseness=1.1](0,3.5) to [out=-90,in=-90] (7,3.5);
\draw (7,3.5) --(7,6.5);
\end{scope}
\draw (4,3) node[] {\small $a$};
\draw (0,3.5) node[left] {\small $x$};
\end{tikzpicture}
=
\theta_a^{-2}\theta_x
\begin{tikzpicture}[baseline=0, thick,scale=.33, shift={(0,-4.8)}]
%
\draw [looseness=0.7](1,6.5) to [out=0,in=90] (2.0,5);
\draw [looseness=1.1](0,6.5) to [out=90, in=90] (7,6.5);
\begin{scope}[decoration={markings, mark=at position 0.20 with {\arrow{<}}}, evaluate={
	\l = 1;
	\cut=5;
},]
\draw[white, line width = \cut] (2,3) to [out=0, in=180] (8,7);
\draw [postaction=decorate](2,3) to [out=0, in=180] (8,7);
\draw[white, line width=\cut] (2,7) to [out=0, in=180] (8,3);
\draw(2,7) to [out=0, in=180] (8,3);
\end{scope}
\begin{scope}[decoration={markings, mark=at position 0.65 with {\arrow{<}}}, evaluate={
	\l = 1;
	\cut=5;
},]
\draw [looseness=0.5](8,3) to [out=0, in=0] (8,7);
\draw [black,looseness=0.5](2,3) to [out=180, in=0] (1,5.5);
\draw [white,line width=\cut,looseness=0.5](2,7) to [out=180, in=0] (1,4.5);
\draw [black,looseness=0.5](2,7) to [out=180, in=0] (1,4.5);
\draw [black,looseness=0.5](1,5.5) to [out=180, in=180] (1,4.5);
\end{scope}
\begin{scope}[decoration={markings, mark=at position 0.00 with {\arrow{<}}}, evaluate={
	\l = 1;
	\cut=5;
},]
\draw [white, line width=\cut, looseness=0.7](1,3.5) to [out=0,in=-90] (2.0,5);
\draw [looseness=0.7](1,3.5) to [out=0,in=-90] (2.0,5);
\draw [looseness=0.5](1,6.5) to [out=180,in=90] (0,3.5);
\draw [white, line width=\cut, looseness=0.5](1,3.5) to [out=180,in=-90] (0,6.5);
\draw [looseness=0.5](1,3.5) to [out=180,in=-90] (0,6.5);
\draw [white, line width=\cut, looseness=1.1](0,3.5) to [out=-90,in=-90] (7,3.5);
\draw [postaction=decorate, looseness=1.1](0,3.5) to [out=-90,in=-90] (7,3.5);
\draw (7,3.5) --(7,6.5);
\end{scope}
\draw (4,3) node[] {\small $a$};
\draw (0,3.5) node[left] {\small $x$};
\end{tikzpicture}
\\
&=
\theta_a^{-2}\theta_x
\begin{tikzpicture}[baseline=0, thick,scale=.33, shift={(-2,-4.8)}]
%
\draw (8.5,5) -- (8.5, 8);
\draw[looseness=1.0] (3,7) to [out=90,in=90] (8.5,8);
\draw [white, line width=5,looseness=0.8](5,5) to [out=90, in=90] (10,5);
\draw [looseness=0.8](5,5) to [out=90, in=90] (10,5);
\begin{scope}[decoration={markings, mark=at position 0.0 with {\arrow{<}}}, evaluate={
	\l = 1;
	\cut=5;
},]
\draw [looseness=0.5](3,3) to [out=90, in =180] (5,7);
\draw [white, line width=5, looseness=0.5](3,7) to [out=-90, in =180] (5,3);
\draw [postaction=decorate,looseness=0.5](3,7) to [out=-90, in =180] (5,3);
\draw [white, line width=\cut,looseness=1.2] (5,3) to [out=0, in=0] (5,7);
\draw [looseness=1.2] (5,3) to [out=0, in=0] (5,7);
\draw[white, line width = \cut,looseness=0.8] (5,5) to [out=-90, in=-90] (10,5);
\end{scope}
\begin{scope}[decoration={markings, mark=at position 0.99 with {\arrow{>}}}, evaluate={
	\l = 1;
	\cut=5;
},]
\draw[postaction = decorate, looseness=0.8] (5,5) to [out=-90, in=-90] (10,5);
\draw[white, line width=\cut,looseness=1.0] (3,3) to [out=-90,in=-90] (8.5,2);
\draw[looseness=1.0] (3,3) to [out=-90,in=-90] (8.5,2);
\draw[white,line width=\cut] (8.5,2) -- (8.5, 5);
\draw (8.5,2) -- (8.5, 5);
\end{scope}
\draw (10,5) node[right] {\small $a$};
\draw (3,7) node[left] {\small $x$};
\end{tikzpicture}
=
\theta_a^{-2}\theta_x^2
\begin{tikzpicture}[baseline=0, thick,scale=.33, shift={(-2,-4.8)}]
%
\draw (8.5,5) -- (8.5, 8);
\draw[looseness=1.0] (3,7) to [out=90,in=90] (8.5,8);
\draw [white, line width=5,looseness=0.8](5,5) to [out=90, in=90] (10,5);
\draw [looseness=0.8](5,5) to [out=90, in=90] (10,5);
\begin{scope}[decoration={markings, mark=at position 0.0 with {\arrow{<}}}, evaluate={
	\l = 1;
	\cut=5;
},]
\draw [looseness=0.5](3,3) to [out=90, in =180] (5,7);
\draw [white, line width=5, looseness=0.5](3,7) to [out=-90, in =180] (5,3);
\draw [postaction=decorate,looseness=0.5](3,7) to [out=-90, in =180] (5,3);
\draw [black,looseness=1.2] (5,3) to [out=0, in=180] (7,5.5);
\draw [white,line width=\cut,looseness=1.2] (5,7) to [out=0, in=180] (7,4.5);
\draw [black,looseness=1.2] (5,7) to [out=0, in=180] (7,4.5);
\draw [looseness=1.2] (7,4.5) to [out=0,in=0] (7,5.5);
\draw[white, line width = \cut,looseness=0.8] (5,5) to [out=-90, in=-90] (10,5);
\end{scope}
\begin{scope}[decoration={markings, mark=at position 0.99 with {\arrow{>}}}, evaluate={
	\l = 1;
	\cut=5;
},]
\draw[postaction = decorate, looseness=0.8] (5,5) to [out=-90, in=-90] (10,5);
\draw[white, line width=\cut,looseness=1.0] (3,3) to [out=-90,in=-90] (8.5,2);
\draw[looseness=1.0] (3,3) to [out=-90,in=-90] (8.5,2);
\draw[white,line width=\cut] (8.5,2) -- (8.5, 5);
\draw (8.5,2) -- (8.5, 5);
\end{scope}
\draw (10,5) node[right] {\small $a$};
\draw (3,7) node[left] {\small $x$};
\end{tikzpicture}
\\
&=
\theta_a^{-2}\theta_x^{2}
\begin{tikzpicture}[baseline=0, thick,scale=.33, shift={(0,-4.8)}]
%
\draw (0.75,8) to [out=90,in=90] (7,8);
\begin{scope}[decoration={markings, mark=at position 0.5 with {\arrow{>}}}, evaluate={
	\l = 1;
	\cut=5;
},]
\draw [looseness=0.8](5.5,6.75) to [out=-90,in=180] (7,5.0);
\draw [looseness=\l](2.,6.75) to [out=180, in=90] (1.25,4.75);
\draw [looseness=\l,white,line width=\cut](1.5,6) to [out=180, in=-90] (0.75,7.5);
\draw [looseness=\l](1.5,6) to [out=180, in=-90] (0.75,7.5);
\draw [looseness=\l,](1.5,6) to [out=0, in=180] (5,9);
\draw [looseness=\l,white,line width=\cut](2,6.75) to [out=0, in=180 ] (5,4.5);
\draw [looseness=\l,](2.,6.75) to [out=0, in=180] (5,4.5);
\draw[white, line width=\cut] (5,9) to [out=0, in =0] (5,4.5);
\draw (5,9) to [out=0, in =0] (5,4.5);
\draw[postaction=decorate] (0.75,7.5) -- (0.75,8);
\draw[white, line width=\cut,looseness=0.8](5.5,6.75) to [out=90,in=180] (7,8.5);
\draw [looseness=0.8](5.5,6.75) to [out=90,in=180] (7,8.5);
\end{scope}
\begin{scope}[decoration={markings, mark=at position 0.5 with {\arrow{>}}}, evaluate={
	\l = 1;
	\cut=5;
},]
\draw [looseness=1.2,postaction=decorate](7,5) to [out=0,in=0] (7,8.5);
\draw (1.25,4.75) to [out=-90,in=-90] (7,4.75);
\draw[white, line width = \cut] (7,4.75) -- (7,8);
\draw (7,4.75) -- (7,8);
\end{scope}
\draw (0.75,8) node[left] {\small $x$};
\draw (8.2,6.6) node[right] {\small $a$};
\end{tikzpicture}
= 
\theta_a^{-2}\theta_x^2
\begin{tikzpicture}[baseline=0, thick,scale=.33, shift={(0,-4.8)}]
%
%
\begin{scope}[decoration={markings, mark=at position 0.5 with {\arrow{>}}}, evaluate={
	\l = 1;
	\cut=5;
},]
\draw [looseness=\l](2.,6.75) to [out=180, in=90] (1.25,4.75);
\draw [looseness=\l,white,line width=\cut](1.5,6) to [out=180, in=-90] (0.75,7.5);
\draw [looseness=\l](1.5,6) to [out=180, in=-90] (0.75,7.5);
\draw [looseness=\l](1.5,6) to [out=0, in=-90] (2.25,7.5);
\draw [looseness=\l,white,line width=\cut](2,6.75) to [out=0, in=90 ] (2.75,4.75);
\draw [looseness=\l](2.,6.75) to [out=0, in=90] (2.75,4.75);
\draw (2.75,4.75) -- (2.75,2.0);
\draw (1.25,4.75) -- (1.25,2.0);
\draw (2.75,2.0) to [out=-90, in=-90] (5,2.0);
\draw (1.25,2.0) to [out=-90, in=-90] (6.5,2.0);
\draw[postaction=decorate] (0.75,7.5) -- (0.75,8);
\draw (0.75,8) node[left] {\small $x$};
\draw (2.25,7.5) -- (2.25,8);
\draw (2.25,8) to [out=90, in=90] (5,8);
\draw (0.75,8) to [out=90, in=90] (6.5,8);
\draw (5,2) -- (5,8);
\draw (6.5,2) -- (6.5,8);
\end{scope}
\begin{scope}[decoration={markings, mark=at position 0.6 with {\arrow{>}}}, evaluate={
	\l = 1.2;
	\cut=5;
},]
\draw[white, line width=\cut] (2,4) ellipse (1.75cm and 0.5cm);
\draw[postaction={decorate}] (2,4) ellipse (1.75cm and 0.5cm);
\draw (0.5,3.6) node[below] {\small $a$};
\draw[white, line width=\cut] (1.25,5) -- (1.25,4);
\draw[white, line width=\cut] (2.75,5) -- (2.75,4);
\draw (1.25,5) -- (1.25,4);
\draw (2.75,5) -- (2.75,4);
\end{scope}
\end{tikzpicture}
\\
&= 
\theta_a^{-2}\theta_x^2 \widetilde{W}_{x\bar{a}}
\end{align*}

(2):
	$$
	\widetilde{W}_{a x}=
\begin{tikzpicture}[baseline=0, thick,scale=.33, shift={(0,-4.8)}]
%
%
\begin{scope}[decoration={markings, mark=at position 0.5 with {\arrow{>}}}, evaluate={
	\l = 1;
	\cut=5;
},]
\draw [looseness=\l](2.,6.75) to [out=180, in=90] (1.25,4.75);
\draw [looseness=\l,white,line width=\cut](1.5,6) to [out=180, in=-90] (0.75,7.5);
\draw [looseness=\l](1.5,6) to [out=180, in=-90] (0.75,7.5);
\draw [looseness=\l](1.5,6) to [out=0, in=-90] (2.25,7.5);
\draw [looseness=\l,white,line width=\cut](2,6.75) to [out=0, in=90 ] (2.75,4.75);
\draw [looseness=\l](2.,6.75) to [out=0, in=90] (2.75,4.75);
\draw (2.75,4.75) -- (2.75,2.0);
\draw (1.25,4.75) -- (1.25,2.0);
\draw (2.75,2.0) to [out=-90, in=-90] (5,2.0);
\draw (1.25,2.0) to [out=-90, in=-90] (6.5,2.0);
\draw[postaction=decorate] (0.75,7.5) -- (0.75,8);
\draw (0.75,7.6) node[left] {\small $a$};
\draw (2.25,7.5) -- (2.25,8);
\draw (2.25,8) to [out=90, in=90] (5,8);
\draw (0.75,8) to [out=90, in=90] (6.5,8);
\draw (5,2) -- (5,8);
\draw (6.5,2) -- (6.5,8);
\end{scope}
\begin{scope}[decoration={markings, mark=at position 0.56 with {\arrow{<}}}, evaluate={
	\l = 1.2;
	\cut=5;
},]
\draw[white, line width=\cut] (2,4) ellipse (1.75cm and 0.5cm);
\draw[postaction={decorate}] (2,4) ellipse (1.75cm and 0.5cm);
\draw (0.5,3.6) node[below] {\small $x$};
\end{scope}
\begin{scope}[decoration={markings, mark=at position 0.0 with {\arrow{<}}}, evaluate={
	\l = 1.2;
	\cut=5;
},]
\draw[white, line width=\cut] (1.25,5) -- (1.25,4);
\draw[white, line width=\cut] (2.75,5) -- (2.75,4);
\draw[black, postaction=decorate] (1.25,5) -- (1.25,4);
\draw (1.25,5) node[left] {\small $a$};
\draw (2.75,5) -- (2.75,4);
\end{scope}
\end{tikzpicture}
=
\begin{tikzpicture}[baseline=0, thick,scale=.33, shift={(0,-4.8)}]
%
%
\begin{scope}[decoration={markings, mark=at position 0.5 with {\arrow{>}}}, evaluate={
	\l = 1;
	\cut=5;
},]
\draw [looseness=\l](2.,6.75) to [out=180, in=90] (1.25,4.75);
\draw [looseness=\l,white,line width=\cut](1.5,6) to [out=180, in=-90] (0.75,7.5);
\draw [looseness=\l](1.5,6) to [out=180, in=-90] (0.75,7.5);
\draw [looseness=\l](1.5,6) to [out=0, in=-90] (2.25,7.5);
\draw [looseness=\l,white,line width=\cut](2,6.75) to [out=0, in=90 ] (2.75,4.75);
\draw [looseness=\l](2.,6.75) to [out=0, in=90] (2.75,4.75);
\draw (2.75,4.75) -- (2.75,2.0);
\draw (1.25,4.75) -- (1.25,2.0);
\draw[] (2.75,2.0) to [out=-90, in=-90] (2.75-5.75,2.0);
\draw[] (1.25,2.0) to [out=-90, in=-90] (1.25-2.75,2.0);
\draw[postaction=decorate] (0.75,7.5) -- (0.75,8);
\draw (0.75,7.6) node[left] {\small $a$};
\draw (2.25,7.5) -- (2.25,8);
\draw[] (2.25,8) to [out=90, in=90] (2.25-5.25,8);
\draw[] (0.75,8) to [out=90, in=90] (0.75-2.25,8);
\draw (-3,2) -- (-3,8);
\draw (-1.5,2) -- (-1.5,8);
\end{scope}
\begin{scope}[decoration={markings, mark=at position 0.56 with {\arrow{<}}}, evaluate={
	\l = 1.2;
	\cut=5;
},]
\draw[white, line width=\cut] (-2.25,4) ellipse (1.75cm and 0.5cm);
\draw[postaction={decorate}] (-2.25,4) ellipse (1.75cm and 0.5cm);
\draw (-3.7,3.6) node[below] {\small $x$};
\end{scope}
\begin{scope}[decoration={markings, mark=at position 0.0 with {\arrow{<}}}, evaluate={
	\l = 1.2;
	\cut=5;
},]
\draw[white, line width=\cut] (-1.5,5) -- (-1.5,4);
\draw[white, line width=\cut] (-3,5) -- (-3,4);
\draw[] (-1.5,5) -- (-1.5,4);
\draw[] (-3,5) -- (-3,4);
\draw[black, postaction=decorate] (1.25,5) -- (1.25,4);
\draw (1.25,5) node[left] {\small $a$};
\draw (2.75,5) -- (2.75,4);
\end{scope}
\end{tikzpicture}
=
\begin{tikzpicture}[baseline=0, thick,scale=.33, shift={(0,-4.8)}]
%
%
\begin{scope}[decoration={markings, mark=at position 0.5 with {\arrow{>}}}, evaluate={
	\l = 1;
	\cut=5;
},]
\draw [looseness=\l](2.,6.75) to [out=180, in=90] (1.25,4.75);
\draw [looseness=\l,white,line width=\cut](1.5,6) to [out=180, in=-90] (0.75,7.5);
\draw [looseness=\l](1.5,6) to [out=180, in=-90] (0.75,7.5);
\draw [looseness=\l](1.5,6) to [out=0, in=-90] (2.25,7.5);
\draw [looseness=\l,white,line width=\cut](2,6.75) to [out=0, in=90 ] (2.75,4.75);
\draw [looseness=\l](2.,6.75) to [out=0, in=90] (2.75,4.75);
\draw (2.75,4.75) -- (2.75,2.0);
\draw (1.25,4.75) -- (1.25,2.0);
\draw[] (2.75,2.0) to [out=-90, in=-90] (2.75-5.75,2.0);
\draw[] (1.25,2.0) to [out=-90, in=-90] (1.25-2.75,2.0);
\draw[postaction=decorate] (0.75,7.5) -- (0.75,8);
\draw (0.75,7.6) node[left] {\small $a$};
\draw (2.25,7.5) -- (2.25,8);
\draw[] (2.25,8) to [out=90, in=90] (2.25-5.25,8);
\draw[] (0.75,8) to [out=90, in=90] (0.75-2.25,8);
\draw (-3,2) -- (-3,8);
\draw (-1.5,2) -- (-1.5,8);
\end{scope}
\begin{scope}[decoration={markings, mark=at position 0.6 with {\arrow{>}}}, evaluate={
	\l = 1.2;
	\cut=5;
},]
\draw[white, line width=\cut] (2,4) ellipse (1.75cm and 0.5cm);
\draw[postaction={decorate}] (2,4) ellipse (1.75cm and 0.5cm);
\draw (0.5,3.6) node[below] {\small $x$};
\end{scope}
\begin{scope}[decoration={markings, mark=at position 0.0 with {\arrow{<}}}, evaluate={
	\l = 1.2;
	\cut=5;
},]
\draw[white, line width=\cut] (1.25,5) -- (1.25,4);
\draw[white, line width=\cut] (2.75,5) -- (2.75,4);
\draw[black, postaction=decorate] (1.25,5) -- (1.25,4);
\draw (1.25,5) node[left] {\small $a$};
\draw (2.75,5) -- (2.75,4);
\end{scope}
\end{tikzpicture}
=
\begin{tikzpicture}[baseline=0, thick,scale=.33, shift={(0,-4.8)}]
%
%
\begin{scope}[decoration={markings, mark=at position 0.5 with {\arrow{>}}}, evaluate={
	\l = 1;
	\cut=5;
},]
\draw [looseness=\l](2.,6.75) to [out=180, in=90] (1.25,4.75);
\draw [looseness=\l,white,line width=\cut](1.5,6) to [out=180, in=-90] (0.75,7.5);
\draw [looseness=\l](1.5,6) to [out=180, in=-90] (0.75,7.5);
\draw [looseness=\l](1.5,6) to [out=0, in=-90] (2.25,7.5);
\draw [looseness=\l,white,line width=\cut](2,6.75) to [out=0, in=90 ] (2.75,4.75);
\draw [looseness=\l](2.,6.75) to [out=0, in=90] (2.75,4.75);
\draw (2.75,4.75) -- (2.75,2.0);
\draw (1.25,4.75) -- (1.25,2.0);
\draw (2.75,2.0) to [out=-90, in=-90] (5,2.0);
\draw (1.25,2.0) to [out=-90, in=-90] (6.5,2.0);
\draw[postaction=decorate] (0.75,7.5) -- (0.75,8);
\draw (0.75,7.6) node[left] {\small $a$};
\draw (2.25,7.5) -- (2.25,8);
\draw (2.25,8) to [out=90, in=90] (5,8);
\draw (0.75,8) to [out=90, in=90] (6.5,8);
\draw (5,2) -- (5,8);
\draw (6.5,2) -- (6.5,8);
\end{scope}
\begin{scope}[decoration={markings, mark=at position 0.6 with {\arrow{>}}}, evaluate={
	\l = 1.2;
	\cut=5;
},]
\draw[white, line width=\cut] (2,4) ellipse (1.75cm and 0.5cm);
\draw[postaction={decorate}] (2,4) ellipse (1.75cm and 0.5cm);
\draw (0.5,3.6) node[below] {\small $x$};
\end{scope}
\begin{scope}[decoration={markings, mark=at position 0.0 with {\arrow{<}}}, evaluate={
	\l = 1.2;
	\cut=5;
},]
\draw[white, line width=\cut] (1.25,5) -- (1.25,4);
\draw[white, line width=\cut] (2.75,5) -- (2.75,4);
\draw[black, postaction=decorate] (1.25,5) -- (1.25,4);
\draw (1.25,5) node[left] {\small $a$};
\draw (2.75,5) -- (2.75,4);
\end{scope}
\end{tikzpicture}
=\widetilde{W}_{a\bar x}
	$$

(3):
\begin{align*}
\sum_{\mu} S^{(z)}_{(a\mu) (a\mu)} 
\,\,\,\,&=\sum_{\mu}
\frac{1}{D \sqrt {d_z}}
\begin{tikzpicture}[baseline=0, thick,scale=.33, shift={(0,-4.8)}]
%
\begin{scope}[decoration={markings, mark=at position 0.5 with {\arrow{>}}}, evaluate={
	\l = 1;
	\cut=5;
},]
\draw [looseness=\l](2.,6.75) to [out=180, in=90] (1.25,5.5);
\draw [looseness=\l,white,line width=\cut](1.5,6) to [out=180, in=-90] (0.75,7.5);
\draw [looseness=\l](1.5,6) to [out=180, in=-90] (0.75,7.5);
\draw [looseness=\l](1.5,6) to [out=0, in=-90] (2.25,7.5);
\draw [looseness=\l,white,line width=\cut](2,6.75) to [out=0, in=90 ] (2.75,5.5);
\draw [looseness=\l](2.,6.75) to [out=0, in=90] (2.75,5.5);
\end{scope}
\begin{scope}[decoration={markings, mark=at position 1 with {\arrow{>}}}, evaluate={
	\l = 1;
	\cut=5;
},]
\draw[] (2,4.5) to [out=70, in=-90] (2.75,5.5);
\draw[postaction=decorate] (2,4.5) to [out=110, in=-90] (1.25,5.5);
\draw (1.25,5.5) node[left] {\small $a$};
\end{scope}
\begin{scope}[decoration={markings, mark=at position 0.5 with {\arrow{>}}}, evaluate={
	\l = 1;
	\cut=5;
},]
\draw[postaction=decorate] (2,4.5)--(2,3.);
\draw (2,3.75) node[left] {\small $z$};
\draw (2,4.5) node[right] {\tiny $\mu$};
\draw (2,3) node[right] {\tiny $\mu$};
\draw[] (2,3) to [out=-70, in=90] (2.75,2.);
\draw[] (2.75,2.0) to [out=-90, in=-90] (5,2.0);
\end{scope}
\begin{scope}[decoration={markings, mark=at position 0.99 with {\arrow{>}}}, evaluate={
	\l = 1;
	\cut=5;
},]
\draw[] (2,3) to [out=-110, in=90] (1.25,2.);
\draw (1.25,2) node[left] {\small $a$};
\draw[black,postaction=decorate] (6.5,2.0) to [out=-90, in=-90] (1.25,2.0);
\end{scope}
\begin{scope}[decoration={markings, mark=at position 0.5 with {\arrow{>}}}, evaluate={
	\l = 1;
	\cut=5;
},]
\draw[] (0.75,7.5) -- (0.75,8);
\draw[] (2.25,7.5) -- (2.25,8);
\draw (2.25,8) to [out=90, in=90] (5,8);
\draw (0.75,8) to [out=90, in=90] (6.5,8);
\draw (5,2) -- (5,8);
\draw (6.5,2) -- (6.5,8);
\end{scope}
\end{tikzpicture}
\,\,\,\,=
\frac{1}{D}\frac{d_a}{d_z}
\begin{tikzpicture}[baseline=0, thick,scale=.33, shift={(0,-4.8)}]
%
%
\begin{scope}[decoration={markings, mark=at position 0.5 with {\arrow{>}}}, evaluate={
	\l = 1;
	\cut=5;
},]
\draw [looseness=\l](2.,6.75) to [out=180, in=90] (1.25,4.75);
\draw [looseness=\l,white,line width=\cut](1.5,6) to [out=180, in=-90] (0.75,7.5);
\draw [looseness=\l](1.5,6) to [out=180, in=-90] (0.75,7.5);
\draw [looseness=\l](1.5,6) to [out=0, in=-90] (2.25,7.5);
\draw [looseness=\l,white,line width=\cut](2,6.75) to [out=0, in=90 ] (2.75,4.75);
\draw [looseness=\l](2.,6.75) to [out=0, in=90] (2.75,4.75);
\draw (2.75,4.75) -- (2.75,2.0);
\draw (1.25,4.75) -- (1.25,2.0);
\draw (2.75,2.0) to [out=-90, in=-90] (5,2.0);
\draw (1.25,2.0) to [out=-90, in=-90] (6.5,2.0);
\draw[postaction=decorate] (0.75,7.5) -- (0.75,8);
\draw (0.75,8) node[left] {\small $a$};
\draw (2.25,7.5) -- (2.25,8);
\draw (2.25,8) to [out=90, in=90] (5,8);
\draw (0.75,8) to [out=90, in=90] (6.5,8);
\draw (5,2) -- (5,8);
\draw (6.5,2) -- (6.5,8);
\end{scope}
\begin{scope}[decoration={markings, mark=at position 0.56 with {\arrow{<}}}, evaluate={
	\l = 1.2;
	\cut=5;
},]
\draw[white, line width=\cut] (2,4) ellipse (1.75cm and 0.5cm);
\draw[postaction={decorate}] (2,4) ellipse (1.75cm and 0.5cm);
\draw (0.5,3.6) node[below] {\small $\omega_{\bar{z}}$};
\draw[white, line width=\cut] (1.25,5) -- (1.25,4);
\draw[white, line width=\cut] (2.75,5) -- (2.75,4);
\draw (1.25,5) -- (1.25,4);
\draw (2.75,5) -- (2.75,4);
\end{scope}
\end{tikzpicture}
\\
\,\,\,\, &=
\frac{1}{D}\frac{d_a}{d_z}\sum_x S_{0\bar z} S_{\bar zx}^*
\begin{tikzpicture}[baseline=0, thick,scale=.33, shift={(0,-4.8)}]
%
%
\begin{scope}[decoration={markings, mark=at position 0.5 with {\arrow{>}}}, evaluate={
	\l = 1;
	\cut=5;
},]
\draw [looseness=\l](2.,6.75) to [out=180, in=90] (1.25,4.75);
\draw [looseness=\l,white,line width=\cut](1.5,6) to [out=180, in=-90] (0.75,7.5);
\draw [looseness=\l](1.5,6) to [out=180, in=-90] (0.75,7.5);
\draw [looseness=\l](1.5,6) to [out=0, in=-90] (2.25,7.5);
\draw [looseness=\l,white,line width=\cut](2,6.75) to [out=0, in=90 ] (2.75,4.75);
\draw [looseness=\l](2.,6.75) to [out=0, in=90] (2.75,4.75);
\draw (2.75,4.75) -- (2.75,2.0);
\draw (1.25,4.75) -- (1.25,2.0);
\draw (2.75,2.0) to [out=-90, in=-90] (5,2.0);
\draw (1.25,2.0) to [out=-90, in=-90] (6.5,2.0);
\draw[postaction=decorate] (0.75,7.5) -- (0.75,8);
\draw (0.75,8) node[left] {\small $a$};
\draw (2.25,7.5) -- (2.25,8);
\draw (2.25,8) to [out=90, in=90] (5,8);
\draw (0.75,8) to [out=90, in=90] (6.5,8);
\draw (5,2) -- (5,8);
\draw (6.5,2) -- (6.5,8);
\end{scope}
\begin{scope}[decoration={markings, mark=at position 0.56 with {\arrow{<}}}, evaluate={
	\l = 1.2;
	\cut=5;
},]
\draw[white, line width=\cut] (2,4) ellipse (1.75cm and 0.5cm);
\draw[postaction={decorate}] (2,4) ellipse (1.75cm and 0.5cm);
\draw (0.5,3.6) node[below] {\small $x$};
\draw[white, line width=\cut] (1.25,5) -- (1.25,4);
\draw[white, line width=\cut] (2.75,5) -- (2.75,4);
\draw (1.25,5) -- (1.25,4);
\draw (2.75,5) -- (2.75,4);
\end{scope}
\end{tikzpicture}
\,\,\,\,=
\frac{d_a}{D^2}\sum_x  S_{zx} {\widetilde{W}}_{ax}
\end{align*}

(4): It follows from (3) by inverting the $S$-matrix.
\end{proof}

\subsection{Invariants determined by modular data}

\subsubsection{Sums of $R$ symbols and signs}

\begin{proposition}

\begin{enumerate}
    \item  For any modular category, the $R$ symbols satisfy: $$\sum_\mu [R^{aa}_c]_{\mu\mu}=\sum_{x,y,z} \frac{\theta_y^2}{\theta_a \theta_x^{2}} \frac{S_{0y} S_{az} S_{xz}^* S_{cx}^* S_{yz}^* }{S_{0z}} ,$$
and
$$\sum_\mu [R^{aa}_c]_{\mu\mu}= \frac{\theta_{c}^{1/2}}{\theta_{a}} \Lambda_{a,c},$$
where $\Lambda_{a,c} \in \mathbb{Z}$ is a signature and $\theta_{c}^{1/2}\Lambda_{a,c}$ is an invariant that is determined by the modular data. This generalizes the Frobenius-Schur indicator, which corresponds to $c=0$, i.e. $\kappa_{a}=\theta_{0}^{1/2}\Lambda_{a,0}$.

\item If the modular category is multiplicity-free, then,
$$R^{aa}_c=\sum_{x,y,z}\frac{\theta_y^2}{\theta_a \theta_x^{2}} \frac{S_{0y} S_{az} S_{xz}^* S_{cx}^* S_{yz}^* }{S_{0z}} = \pm \frac{\theta_{c}^{1/2}}{\theta_{a}},$$
where the sign $\Lambda_{a,c} = \pm 1$ of the square root in this expression is determined by the modular data.
\end{enumerate}
\end{proposition}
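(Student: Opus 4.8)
\emph{Plan of proof.} The four assertions will reduce to two logically independent inputs: a \emph{spectral} statement about the operator $R^{aa}_c$ that comes directly from the ribbon structure, and a \emph{Verlinde-type} rewriting of its trace in terms of $S$ and $T$. I would organize things so that the spectral input yields the second displayed identity of (1), the integrality of $\Lambda_{a,c}$, the Frobenius--Schur specialization $c=0$, and the sign statement in (2), while the Verlinde input yields the first displayed identity of (1), and hence the modular-data expression in (2) after specializing to the case $N^c_{aa}\le 1$.

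For the spectral part I would view $R^{aa}_c$ as the matrix of the braiding $c_{a,a}\colon a\otimes a\to a\otimes a$ restricted to the isotypic component $\Hom(c,a\otimes a)$. The ribbon (balancing) axiom says that the double braiding $c_{a,a}\circ c_{a,a}$ acts on this component by the scalar $\theta_c/\theta_a^2$, so $(R^{aa}_c)^2=(\theta_c/\theta_a^2)\,\id$. Hence every eigenvalue of $R^{aa}_c$ equals $\pm\,\theta_c^{1/2}/\theta_a$, and the normalized operator $M:=(\theta_a/\theta_c^{1/2})R^{aa}_c$ satisfies $M^2=\id$; being annihilated by $t^2-1$ it is diagonalizable with eigenvalues in $\{\pm1\}$, so $\Lambda_{a,c}:=\tr M\in\Z$ is literally the signature (the number of $+1$'s minus the number of $-1$'s). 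Taking traces gives $\sum_\mu[R^{aa}_c]_{\mu\mu}=(\theta_c^{1/2}/\theta_a)\Lambda_{a,c}$. Setting $c=0$, the space $\Hom(0,a\otimes a)$ is one-dimensional exactly when $a$ is self-dual, $R^{aa}_0$ is then a single scalar, and $\Lambda_{a,0}=\theta_a[R^{aa}_0]\in\{\pm1\}$ is the Frobenius--Schur indicator $\kappa_a$ (recall $\theta_0=1$). In the multiplicity-free case $N^c_{aa}\in\{0,1\}$, so $R^{aa}_c$ is a $1\times1$ matrix equal to its own trace, giving $R^{aa}_c=\pm\theta_c^{1/2}/\theta_a$ with $\Lambda_{a,c}=\pm1$; this is exactly (2) once the trace is known to be a function of the modular data.

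For the Verlinde part, which is the genuinely computational step, I would first record the graphical meaning of the trace: up to the normalization of the trivalent vertices, $d_c\sum_\mu[R^{aa}_c]_{\mu\mu}$ is the invariant of the closed ribbon graph obtained from a ``theta graph'' whose two $a$-edges carry a single crossing and whose third edge is the closed $c$-loop. I would then evaluate this invariant in modular data by resolving the configuration: inserting complete sets of fusion channels, replacing a loop of type $x$ encircling a strand of type $b$ by the scalar $S_{xb}/S_{0b}$, and assigning to each localized crossing or curl its twist factor. The three summation labels $x,y,z$ arise as the labels of these three resolutions, producing the factor $\theta_y^2/\theta_x^2$ from the twists, the factors $S_{0y}$ and $S_{cx}^*$ from the outer closures, and the cluster $S_{az}S_{xz}^*S_{yz}^*/S_{0z}$ from the innermost resolution. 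The main obstacle is precisely this bookkeeping: getting the exact powers of $\theta$ and the correct placement of $S$ versus $S^*$ (and of the $0$-indices) requires careful tracking of vertex normalizations, orientations, and framings, since a single misrouted twist or conjugation changes the answer.

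Finally I would pin down the result by two cross-checks that also serve to fix all conventions. First, the innermost cluster is a Verlinde fusion coefficient in disguise: since $S_{0z}$ is real and $N^a_{xy}\in\Z_{\ge0}$, complex-conjugating the Verlinde formula gives $\sum_z S_{az}S_{xz}^*S_{yz}^*/S_{0z}=N^a_{xy}$, so the triple sum collapses to the generalized Bantay form $(1/\theta_a)\sum_{x,y}N^a_{xy}\,(\theta_y^2/\theta_x^2)\,S_{0y}S_{cx}^*$. Second, specializing $c=0$ and using $S_{0x}=d_x/D$ turns this into $\theta_a^{-1}D^{-2}\sum_{x,y}N^a_{xy}\,d_xd_y\,\theta_y^2/\theta_x^2$, which, by the symmetry $N^a_{xy}=N^a_{yx}$, is $\kappa_a/\theta_a$ with $\kappa_a$ the classical Bantay expression for the Frobenius--Schur indicator; this matches the spectral computation of the previous paragraph. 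Since the final expression involves only entries of $S$ and the twists $\theta=\operatorname{diag}(T)$, it is manifestly a function of $(S,T)$, so both $\sum_\mu[R^{aa}_c]_{\mu\mu}$ and the invariant $\theta_c^{1/2}\Lambda_{a,c}$ are determined by the modular data, completing (1) and (2).
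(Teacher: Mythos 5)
Your spectral half is complete and, if anything, cleaner than the paper's own treatment: from the ribbon identity $(R^{aa}_c)^2=(\theta_c/\theta_a^2)\,\id$ you get diagonalizability of $M=(\theta_a/\theta_c^{1/2})R^{aa}_c$ directly from $M^2=\id$, where the paper instead appeals to gauge freedom to diagonalize $R^{aa}_c$. This correctly yields the second displayed identity of (1), the integrality of $\Lambda_{a,c}$, the Frobenius--Schur specialization at $c=0$, and part (2) --- but all of this is conditional on the first displayed identity.

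That first identity is where there is a genuine gap, and it is the actual computational content of the proposition. You describe the same graphical strategy the paper executes (encircle the $a$--$a$ crossing by a loop carrying the charge-$c$ projector, isotope, expand the Kirby color, collect twist and $S$-factors), but you explicitly defer the bookkeeping, calling it ``the main obstacle,'' and propose instead to pin down the answer by two cross-checks: the Verlinde collapse of the $z$-sum, and agreement with Bantay's formula at $c=0$. These checks cannot determine the formula. The Verlinde step is an algebraic rewriting of the target expression into the form $\theta_a^{-1}\sum_{x,y}N^{a}_{xy}(\theta_y^2/\theta_x^2)S_{0y}S_{cx}^*$; it says nothing about that expression being equal to $\operatorname{tr} R^{aa}_c$. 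And the $c=0$ check is blind to exactly the features you admit are in doubt: since $S_{0x}$ is real, replacing $S_{cx}^*$ by $S_{cx}$ (i.e.\ confusing $c$ with $\bar c$, which changes the answer in general because $N_{aa}^{c}\neq N_{aa}^{\bar c}$, the modified formula computing $\operatorname{tr} R^{aa}_{\bar c}$ instead) leaves the $c=0$ value unchanged; likewise, since $N^{a}_{xy}=N^{a}_{yx}$, at $c=0$ the substitution $\theta_y^2/\theta_x^2\mapsto\theta_x^2/\theta_y^2$ gives the same sum, while for $c\neq 0$ the factor $S_{cx}^*$ breaks the $x\leftrightarrow y$ symmetry and the two choices differ. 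So a whole family of inequivalent candidate formulas passes both of your cross-checks, and only the actual diagrammatic evaluation --- the chain of isotopies and local evaluations the paper carries out explicitly, arriving at $d_c\sum_{x,y}N_{\bar x a}^{\,y}\,S_{0y}S_{cx}^*\,\theta_y^2/(\theta_x^2\theta_a)$ before applying Verlinde --- establishes the first displayed equation. As written, the central equality is asserted and sanity-checked, not proved.
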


\begin{proof}
(1): On the one hand, we have


$$\btikz{thick,scale=.5,baseline=0}

\uotwostrandsigmaone{}{}{}{}{}{}

\begin{scope}[decoration={markings,mark=at position 0 with {\arrow{>}}}]
\draw[postaction={decorate}] (1,1.5) node[left] {$a$} to [out=90,in=90] (3.5,1.5);
\draw (3.5,1.5)--(3.5,0);
\begin{pgfonlayer}{top}
\draw[thick] (1,.05)--(1,-.5);
\draw[thick] (2,.05)--(2,-.5);
\draw[thick] (1,.-1)--(1,-1.5);
\draw[thick] (2,-1)--(2,-1.5);
\draw[thick] (2,-1.5) to [out=-90,in=-90] (3,-1.5)--(3,1.5) to[out=90,in=90] (2,1.5);
\draw[thick] (1,-1.5) to [out=-90,in=-90] (3.5,-1.5)--(3.5,0);
\end{pgfonlayer}
\end{scope}
\begin{scope}[decoration={markings,mark=at position .6 with {\arrow{<}}}]
\draw[thick,postaction={decorate}] (1.5,-.5) circle[x radius=1.25, y radius=.25];
\draw (0,-.75) node {$w_c$};
\end{scope}
\draw[white,fill=white] (.75,-.5) rectangle (1.25,0);
\draw[white,fill=white] (1.75,-.5) rectangle (2.25,0);

\etikz=\sum_{\mu} \sqrt{\frac{d_c}{d_a^2}} \btikz{thick,scale=.5,baseline=0}

\uotwostrandsigmaone{}{}{}{}{}{}

\begin{scope}[decoration={markings,mark=at position 0 with {\arrow{>}}}]
\draw[postaction={decorate}] (1,1.5) node[left] {$a$} to [out=90,in=90] (3.5,1.5);
\draw (3.5,1.5)--(3.5,0);
\begin{pgfonlayer}{top}
\draw[thick] (1,.05)--(1.5,-.5)--(2,.05);
\draw[thick] (1,-1.5)--(1.5,-1)--(2,-1.5);

\draw[thick] (2,-1.5) to [out=-45,in=-90] (3,-1.5)--(3,1.5) to[out=90,in=90] (2,1.5);
\draw[thick] (1,-1.5) to [out=225,in=-90] (3.5,-1.5)--(3.5,0);
\draw[thick] (1.5,-1)--(1.5,-.5);
\draw (1.5,-.75) node[left] {$c$};

\draw (1.5,-.5) node[right] {\tiny $\mu$};
\draw (1.5,-1) node[right] {\tiny $ \mu$};
\end{pgfonlayer}
\end{scope}

\draw[white,fill=white] (.75,-.5) rectangle (1.25,0);
\draw[white,fill=white] (1.75,-.5) rectangle (2.25,0);

\etikz=\sum_{\mu} [R^{aa}_c]_{\mu\mu} d_{c} .$$


On the other hand, we have

$$\btikz{thick,scale=.5,baseline=0}

\uotwostrandsigmaone{}{}{}{}{}{}

\begin{scope}[decoration={markings,mark=at position 0 with {\arrow{>}}}]
\draw[postaction={decorate}] (1,1.5) node[left] {$a$} to [out=90,in=90] (3.5,1.5);
\draw (3.5,1.5)--(3.5,0);
\begin{pgfonlayer}{top}
\draw[thick] (1,.05)--(1,-.5);
\draw[thick] (2,.05)--(2,-.5);
\draw[thick] (1,.-1)--(1,-1.5);
\draw[thick] (2,-1)--(2,-1.5);
\draw[thick] (2,-1.5) to [out=-90,in=-90] (3,-1.5)--(3,1.5) to[out=90,in=90] (2,1.5);
\draw[thick] (1,-1.5) to [out=-90,in=-90] (3.5,-1.5)--(3.5,0);
\end{pgfonlayer}
\end{scope}
\begin{scope}[decoration={markings,mark=at position .6 with {\arrow{<}}}]
\draw[thick,postaction={decorate}] (1.5,-.5) circle[x radius=1.25, y radius=.25];
\draw (0,-.75) node {$w_c$};
\end{scope}
\draw[white,fill=white] (.75,-.5) rectangle (1.25,0);
\draw[white,fill=white] (1.75,-.5) rectangle (2.25,0);

\etikz=\btikz{thick,scale=.5,baseline=10}

\draw[black] (.75,1.5) to [out=90, in=-90] (1.25,2);
\draw[black] (.75,-.5) to [out=90, in=-90] (1.25,0);
\draw[black] (1.25,0) to [out=90, in=-30] (1.125,.125);
\draw[black] (1.25,2) to [out=90, in=-30] (1.125,2.125);

\draw[black] (.875,2.25) to [out=-30, in=-90] (.75, 2.375);
\draw[black] (.875,.25) to [out=-30, in=-90] (.75, .375);

\draw[black] (.75,-.5) to [out=-90, in=-90] (.25,-.5);
\begin{scope}[decoration={markings,mark=at position .075 with {\arrow{>}}}]

\draw[postaction={decorate}] (.25,-.5)--(.25,2.375) to [out=90,in=90] (.75,2.375);
\end{scope}
\draw[black] (.75,.375)--(.75,1.5);
\uotwostrandsigmaone{black}{black}{black}{}{}{}
\draw (-.25,0) node {$w_c$};$$

\begin{scope}[decoration={markings,mark=at position .045 with {\arrow{>}}}]
\draw[black,postaction={decorate}] (1,2)--(1,2.5) node[right] {$a$} to [out=90,in=90] (4,2.5) --(4,-1) to [out=-90,in=-90] (1,-1);
\end{scope}
\draw[black] (1,-.5)--(1,-1);
\draw (2,1.5) to [out=90, in=90] (3,1.5) --(3,0) to [out=-90,in=-90] (2,0);

\etikz= \theta_a \btikz{thick,rotate=-90,scale=.33,baseline=10}
\begin{scope}[decoration={markings, mark=at position 0 with {\arrow{<}}}]
\draw[postaction={decorate}] (-5,0) node[left] {$a$}--(-1,0);
\end{scope}
\draw (3,0)--(1,0);
\begin{scope}
\draw (-5,0) to [out=180, in=180] (-5,2)--(3,2) to [out=0, in=0] (3,0);
\draw (-4,0) sin (-3,1);
\draw[white,line width=8]  (-4,0) to [out=237.5,in=180] (-4,-2)--(2,-2) to [out=0, in=-62.5] (2,0);
\draw (-4,0) to [out=237.5,in=180] (-4,-2);
\begin{scope}[decoration={markings, mark=at position 1 with {\arrow{<}}}]
\draw[postaction={decorate}] (-4,-2)--(2,-2) node[left] {$w_c$};
\draw (2,-2) to [out=0, in=-62.5] (2,0);
\end{scope}
\draw[white,line width=8]   (-1,-1) cos (-2,0);
\draw (-1,-1) cos (-2,0);
\draw (-1,-1) cos (0,0);
\draw (0,0) sin (1,1);
\draw[white,line width=8] (1,1) cos (2,0);
\draw (1,1) cos (2,0) ;
\draw[white,line width=8] (-1,0)--(1,0);
\draw (-1,0)--(1,0);
\draw[white,line width=8]  (-2,0) sin (-3,1);
\draw (-2,0) sin (-3,1);
\end{scope}
\draw[white,line width=8] (-5,0)--(-3,0);
\draw (-3,0)--(-5,0);
\etikz = 
\theta_a \sum_x S_{0c}S_{cx}^*
\btikz{thick,rotate=-90,scale=.33,baseline=10}
\begin{scope}[decoration={markings, mark=at position 0 with {\arrow{<}}}]
\draw[postaction={decorate}] (-5,0) node[left] {$a$}--(-1,0);
\end{scope}
\draw (3,0)--(1,0);
\begin{scope}
\draw (-5,0) to [out=180, in=180] (-5,2)--(3,2) to [out=0, in=0] (3,0);
\draw (-4,0) sin (-3,1);
\draw[white,line width=8]  (-4,0) to [out=237.5,in=180] (-4,-2)--(2,-2) to [out=0, in=-62.5] (2,0);
\draw (-4,0) to [out=237.5,in=180] (-4,-2);
\begin{scope}[decoration={markings, mark=at position 1 with {\arrow{<}}}]
\draw[postaction={decorate}] (-4,-2)--(2,-2) node[left] {$x$};
\draw (2,-2) to [out=0, in=-62.5] (2,0);
\end{scope}
\draw[white,line width=8]   (-1,-1) cos (-2,0);
\draw (-1,-1) cos (-2,0);
\draw (-1,-1) cos (0,0);
\draw (0,0) sin (1,1);
\draw[white,line width=8] (1,1) cos (2,0);
\draw (1,1) cos (2,0) ;
\draw[white,line width=8] (-1,0)--(1,0);
\draw (-1,0)--(1,0);
\draw[white,line width=8]  (-2,0) sin (-3,1);
\draw (-2,0) sin (-3,1);
\end{scope}
\draw[white,line width=8] (-5,0)--(-3,0);
\draw (-3,0)--(-5,0);
\etikz $$


\begin{align*}=\theta_a \sum_{x,y,\alpha} S_{0c}S_{cx}^{*} \sqrt{\frac{d_y}{d_x d_a}}
\hspace{5pt}
 \begin{tikzpicture}[thick,scale=.5,baseline=-40]
\braid[number of strands={2}] a_1^-1 a_1^-1 a_1^-1 a_1^-1;
\draw (2,-4.5) -- (1.5,-5);
\draw (1,-4.5) -- (1.5, -5);
\begin{scope}[decoration={markings, mark=at position 0 with {\arrow{>}}}]
\draw[postaction={decorate}] (2,0) node[right] {\small $a$} to [out=90, in =90] (3,0) -- (3,-5.5) to [out=-90, in=-45] (1.5,-5.5);
\end{scope}
\begin{scope}[decoration={markings, mark=at position 0.7675 with {\arrow{<}}}]
\draw[postaction={decorate}] (1,0) to [out=90, in =90] (0,0) -- (0,-5.5) node[left] {\small $x$} to [out=-90, in=-135] (1.5,-5.5);
\end{scope}
\begin{scope}[decoration={markings, mark=at position 0.75 with {\arrow{>}}}]
\draw[postaction={decorate}] (1.5,-5.5) node[right] {\tiny$\alpha$}--(1.5,-5)node[right]{\tiny $\alpha$};
\draw (1.5,-5.25) node[left] {\small $y$};
\end{scope}
\end{tikzpicture} &=\sum_{x,y,\alpha} S_{0c}S_{cx}^*\sqrt{\frac{d_y}{d_xd_a}} \frac{\theta_y^2}{\theta_x^2\theta_a} \hspace{5pt} \begin{tikzpicture}[baseline=0, thick,scale=.75]
\begin{scope}[decoration={markings, mark=at position 0.5 with {\arrow{>}}}]
\draw[postaction={decorate}] (0,-1) node[right] {\tiny$\alpha$}--(0,1)node[right]{\tiny $\alpha$};
\draw (0,0) node[left] {\small $y$};
\end{scope}
\begin{scope}[decoration={markings, mark=at position 0.1 with {\arrow{>}}}]
\draw[postaction={decorate}] (0,1)  to [out=45, in =90] (1.5,1)--(1.5,-1) to [out=-90,in=-45] (0,-1);
\end{scope}
\begin{scope}[decoration={markings, mark=at position 0.5 with {\arrow{<}}}]
\draw[postaction={decorate}] (0,1)  to [out=135, in =90] (-1.5,1)--(-1.5,-1) to [out=-90,in=225] (0,-1);
\draw (-1.5,0) node[left] {\small $x$};
\draw (0.1,1.55) node[right] {\small $a$};
\end{scope}
\end{tikzpicture} \\
&=d_c \sum_{x,y} N_{\bar{x}a}^y S_{0y}S_{cx}^* \frac{\theta_y^2}{\theta_x^2\theta_a}\end{align*}

It follows (using the Verlinde formula) that $$\sum_{\mu}[R_c^{aa}]_{\mu\mu} = \sum_{x,y,z} \frac{S_{xz}^* S_{az} S_{yz}^* }{S_{0z}} S_{0y} S_{cx}^* \frac{\theta_y^2}{ \theta_a \theta_x^{2} }.$$

The second property follows from the ribbon property $\left[R^{ab}_c R^{ba}_c\right]_{\mu \nu}=\theta_a^{-1}\theta_b^{-1}\theta_c \delta_{\mu \nu}$ (which holds for any pre-modular category), and is made evident by using the gauge freedom to diagonalize $R^{aa}_c$, in which case $\Lambda_{a,c}$ is the trace of the signature matrix $\frac{\theta_{a}}{\theta_{c}^{1/2}} \left[R^{aa}_c\right]$.

(2): Clear from (1).
\end{proof}

Similar results for unitary modular categories can be found in \cite{GR17}.

\subsubsection{Two-strand closures}

\begin{theorem}

\begin{enumerate}
    \item 
For knots that are two-braid closures colored by $a$, the colored knot invariant is: 
$$Z(\widehat{\sigma_1^{2n+1}})={\tilde{\textrm{Tr}}}_q[(R^{aa})^{2n+1}]=\sum_c d_c \left (\sum_\mu [R^{aa}_c]_{\mu\mu} \right ) {\left(\frac{\theta_c}{\theta_a^2}\right)}^n,$$ where $n$ is odd.

\item For links that are two-braid closures with the two components colored by $a,b$, the colored link invariant is:
$$Z(\widehat{\sigma_1^{2n}})={\tilde{\textrm{Tr}}}_q[(R^{ab}R^{ba})^n]=\sum_{c,z} d_c \left (\frac{S_{az}S_{bz}S_{cz}^*}{S_{0z}}\right ) {\left(\frac{\theta_c}{\theta_a\theta_b}\right)}^n.$$
\end{enumerate}
\end{theorem}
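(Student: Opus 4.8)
The plan is to realize each invariant as a quantum (categorical) trace of a power of the braiding operator and then to diagonalize that operator against the fusion decomposition. First I would recall that the quantum invariant of the closure $\widehat{\sigma_1^k}$ of the two-strand braid $\sigma_1^k$ is the trace $\tilde{\textrm{Tr}}_q$ of the operator obtained by representing the generator $\sigma_1\in B_2$ by the braiding on the appropriate tensor product. When the braid word has odd length $2n+1$ the two strands are joined into a single knot component colored by $a$, so the operator is $(R^{aa})^{2n+1}$ acting on $V_a\otimes V_a$; when the length is even the closure is a two-component link whose strands carry colors $a$ and $b$, and the relevant operator is the $n$-th power of the double braiding $R^{ab}R^{ba}$ on the fusion spaces of $V_a\otimes V_b$.

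The main computational device is the isotypic decomposition $V_a\otimes V_b=\bigoplus_c \Hom(V_c,V_a\otimes V_b)\otimes V_c$ together with the ribbon identity $[R^{ab}_c R^{ba}_c]_{\mu\nu}=\theta_a^{-1}\theta_b^{-1}\theta_c\,\delta_{\mu\nu}$ recorded in the previous proposition. Restricting to the channel $c$ and setting $a=b$ gives $(R^{aa}_c)^2=\frac{\theta_c}{\theta_a^2}\,\id$, whence $(R^{aa}_c)^{2n+1}=\left(\frac{\theta_c}{\theta_a^2}\right)^n R^{aa}_c$ as operators on the multiplicity space. Since the quantum trace splits over channels, weighting the ordinary trace on the multiplicity space of channel $c$ by the quantum dimension $d_c$, I obtain
$$\tilde{\textrm{Tr}}_q[(R^{aa})^{2n+1}]=\sum_c d_c\left(\frac{\theta_c}{\theta_a^2}\right)^n\sum_\mu[R^{aa}_c]_{\mu\mu},$$
which is part (1). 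For part (2) the same decomposition applied to the monodromy gives $(R^{ab}_c R^{ba}_c)^n=\left(\frac{\theta_c}{\theta_a\theta_b}\right)^n\id$ on the $N_{ab}^c$-dimensional multiplicity space, so the quantum trace is $\sum_c d_c N_{ab}^c\left(\frac{\theta_c}{\theta_a\theta_b}\right)^n$; substituting the Verlinde formula $N_{ab}^c=\sum_z\frac{S_{az}S_{bz}S_{cz}^*}{S_{0z}}$ produces the stated $S$-matrix expression.

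In order, the steps are: (i) set up each braid closure as a quantum trace and identify the tensor product and the power of the braiding relevant to the odd and even cases; (ii) pass to the fusion channels and apply the ribbon identity to evaluate powers of the braiding channel by channel; (iii) reassemble the quantum trace with the weights $d_c$; and (iv) in the even case eliminate the fusion multiplicities via Verlinde. That these invariants lie within the modular data then follows because every factor on the right-hand sides---the quantum dimensions $d_c$, the twists $\theta$, the entries of $S$, and, for part (1), the quantity $\sum_\mu[R^{aa}_c]_{\mu\mu}=\frac{\theta_c^{1/2}}{\theta_a}\Lambda_{a,c}$ already shown to be modular-data-determined in the preceding proposition---is a function of $(S,T)$.

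I expect the principal obstacle to be bookkeeping rather than anything conceptual: one must keep the normalization implicit in $\tilde{\textrm{Tr}}_q$, the framing and sign conventions relating the algebraic $R$ to the geometric crossing, and the identification of the multiplicity-space traces mutually consistent. This is most delicate in the odd case, where a single uncancelled factor of $R^{aa}_c$ survives and its trace $\sum_\mu[R^{aa}_c]_{\mu\mu}$ must be correctly recognized as the gauge-invariant quantity analyzed earlier; verifying that no stray framing factor spoils this identification is the one point that genuinely requires care.
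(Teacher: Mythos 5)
Your proposal is correct and takes essentially the same route as the paper: the paper's graphical-calculus computation is exactly your channel-by-channel argument, namely decomposing $V_a\otimes V_b$ into fusion channels, using the ribbon identity to reduce $(R^{aa}_c)^{2n+1}$ to $\left(\frac{\theta_c}{\theta_a^2}\right)^n R^{aa}_c$ (resp.\ the monodromy power to $\left(\frac{\theta_c}{\theta_a\theta_b}\right)^n$), weighting each channel's multiplicity trace by $d_c$, and invoking the Verlinde formula in part (2). There is no substantive difference between the two arguments.
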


\begin{proof}
(1): 
$$\tilde{\textrm{T}}\text{r}_q[(R^{aa})^{2n+1}]=  \btikz{baseline=-25,scale=.5}
\begin{scope}[thick]
\braid a_1^{-1} a_1^{-1} a_1^{-1};
\draw (2,0) to [out=90,in=90] (3.5,0) --(3.5,-3.5) to [out=-90, in=-90] (2,-3.5);

\begin{scope}[decoration={markings, mark=at position 0 with {\arrow{>}}}]
\draw[postaction={decorate}] (1,0) node[left] {$a$} to [out=90,in=90] (4.5,0) --(4.5,-3.5) to [out=-90, in=-90] (1,-3.5);
\end{scope}
\end{scope}
\draw[dashed] (.5,-3.25) rectangle (2.5,-1.25);
\draw (2.5,-2) node[right] {$n$};
\etikz = \sum_{c,\mu} d_c  [R^{aa}_c]_{\mu\mu}\left ( \frac{\theta_c}{\theta_a^2} \right )^n = \sum_c d_c  \left (\sum_{\mu} [R^{aa}_c]_{\mu\mu}\right)\left ( \frac{\theta_c}{\theta_a^2} \right )^n$$
(2):\\
\begin{align*} 
\tilde{\text{T}}\text{r}_q[(R^{ab}R^{ba})^{n}] =\btikz{baseline=-20,scale=.5}
\draw[dashed] (.5,-2.25) rectangle (2.5,-.25);
\begin{scope}[thick]
\braid a_1^{-1} a_1^{-1};
\draw (1,0)--(1,.25);
\draw (2,0)--(2,.25);
\draw (1,-2.5)--(1,-2.75);
\draw (2,-2.5)--(2,-2.75);
\draw (2.5,-1.5) node[right] {$n$};
\begin{scope}[decoration={markings, mark=at position 0 with {\arrow{>}}}]
\draw[postaction={decorate}] (1,.25) node[left] { $a$} to [out=90,in=90] (4.5,.25) --(4.5,-2.75) to [out=-90, in=-90] (1,-2.75);
\draw[postaction={decorate}] (2,.25) node[right] {$b$} to [out=90,in=90] (3.5,0.25) --(3.5,-2.75) to [out=-90, in=-90] (2,-2.75);
\end{scope}
\end{scope}
\etikz = \sum_{c,\mu} d_c \left ( \frac{\theta_c}{\theta_a\theta_b} \right )^n&= \sum_c d_c N_{ab}^c   \left ( \frac{\theta_c}{\theta_a\theta_b} \right )^n  \\ 
 & = \sum_{c,z} d_c  \left (\frac{S_{az}S_{bz}S_{cz}^*}{S_{0z}} \right) \left ( \frac{\theta_c}{\theta_a\theta_b} \right )^n
\end{align*}
\end{proof}

\subsubsection{Frobenius-Schur indicators}

\begin{proposition}
The $n$th order Frobenius-Schur is determined by the modular data:

$$\nu_a^n=\frac{1}{D^2}\sum_{x,y}N_{ax}^yd_x d_y {\left(\frac{\theta_y}{\theta_x}\right)}^n.$$

\end{proposition}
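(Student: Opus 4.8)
The plan is to start from a graphical expression for the higher Frobenius--Schur indicator $\nu_a^n$ and to resolve it with the same toolkit used above (resolutions of the identity, ribbon twists, bubble normalizations, and the Verlinde formula where convenient). Recall that $\nu_a^n$ is the normalized quantum trace of the $n$-fold cyclic rotation acting on the fusion space attached to the object $a$; following the graphical calculus of modular categories, one represents $\nu_a^n$ as a standard closed colored ribbon diagram in $S^3$ in which an $a$-colored strand is wound $n$ times before being closed off, with an overall normalization supplying the factor $D^{-2}$.

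First I would fuse the $a$-strand against an auxiliary loop colored by an arbitrary simple object $x$, inserting the resolution of the identity on $a\otimes x$ into its splitting channels $y$, using $a\otimes x\cong\bigoplus_y N_{ax}^y\, y$. Closing the auxiliary $x$-loop and the resulting $y$-loop in the graphical calculus contributes the quantum dimensions $d_x$ and $d_y$, while the multiplicity of the channel is exactly $N_{ax}^y$. The $n$-fold twisting of the composite relative to the fixed inner $x$-loop is the crucial step: the ribbon structure evaluates this winding on the channel $y$ with eigenvalue $\theta_y^{\,n}$, while the compensating untwisting of the inner $x$-loop produces $\theta_x^{-n}$, so that the net twist contribution is $(\theta_y/\theta_x)^n$. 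Note that no stray factor $\theta_a$ survives, since the $a$-strand is never twisted in isolation but only as part of the fused channel $y$.

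Collecting the multiplicities, quantum dimensions, twist factors, and the normalization then yields
$$\nu_a^n=\frac{1}{D^2}\sum_{x,y}N_{ax}^y\, d_x\, d_y\left(\frac{\theta_y}{\theta_x}\right)^n,$$
which is the claimed formula. As a consistency check I would verify the normalization on $a=\mathbf{1}$, where $N_{\mathbf{1}x}^y=\delta_{xy}$ forces $\nu_{\mathbf{1}}^n=D^{-2}\sum_x d_x^2=1$, and compare the $n=2$ case against the ordinary Frobenius--Schur indicator $\kappa_a$ obtained in the preceding proposition.

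I expect the main obstacle to be the first step rather than the algebra: namely, justifying the passage from the categorical definition of $\nu_a^n$ (the trace of the cyclic-rotation endomorphism) to the specific twisted-loop diagram, and then tracking the ribbon twists through the $n$-fold winding carefully enough to be certain the eigenvalue is precisely $(\theta_y/\theta_x)^n$ with a single power of $D^{-2}$ and no spurious $\theta_a$ or extra normalization loops. Once this graphical identity is pinned down, the remaining manipulation is routine and parallels the computations in the earlier propositions; alternatively, one may recognize the resulting sum as Bantay's formula for the higher indicator and invoke the Ng--Schauenburg identification of $\nu_a^n$ with the modular-data expression, reconciling index conventions via $d_x=D\,S_{0x}$ and $N_{ax}^y=N_{y\bar x}^{\,a}$.
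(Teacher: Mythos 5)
The paper offers no proof of this proposition at all: it is stated as a known fact, namely Bantay's formula for the higher Frobenius--Schur indicators, whose categorical justification is the Ng--Schauenburg theorem identifying $\nu_a^n$ (the trace of the rotation operator on $\operatorname{Hom}(\mathbf{1},a^{\otimes n})$) with exactly this modular-data expression. Consequently your fallback route --- recognize the sum as Bantay's formula and invoke Ng--Schauenburg, reconciling conventions via $d_x=d_{\bar{x}}$, $\theta_x=\theta_{\bar{x}}$ and $N_{ax}^y=N_{y\bar{x}}^a$ --- is a complete and legitimate proof, and it is in effect the paper's own (implicit) stance. Your consistency checks (the normalization at $a=\mathbf{1}$, comparison with $\kappa_a$) are also correct.

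Your graphical sketch, however, does not stand on its own, and the gap is exactly where you flagged it. The trace of the rotation operator on $\operatorname{Hom}(\mathbf{1},a^{\otimes n})\cong V(S^2;a,\dots,a)$ is not a priori the evaluation of any link in $S^3$; it is the TQFT invariant of the mapping torus of the order-$n$ rotation of the $n$-punctured sphere, and one must pass to a surgery presentation of that $3$-manifold before graphical calculus applies. Once one does, your auxiliary $x$-loop is the Kirby-colored surgery circle, and the evaluation of the $a$-curve winding $n$ times through it is precisely the two-strand-closure computation proved earlier in the paper, which gives $\sum_y N_{ax}^y\,d_y\,\bigl(\theta_y/(\theta_a\theta_x)\bigr)^n$ --- note the $\theta_a^n$ in the denominator. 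Summing against $d_x/D^2$ then produces $\theta_a^{-n}$ times the claimed formula, so your assertion that ``no stray factor $\theta_a$ survives'' is not automatic: that factor is only cancelled by the framing correction attached to the surgery presentation (the $2$-framing/anomaly bookkeeping of the mapping torus), which your sketch never tracks. As written, the sketch is a heuristic that, taken literally, yields the wrong answer by $\theta_a^{-n}$; the actual proof content is either this careful framing computation or the citation to Bantay/Ng--Schauenburg, and you should commit to one of the two.
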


\subsubsection{Invariants of lens spaces}

The invariant of any closed oriented $3$-manifold with a canonical $2$-framing is also an invariant of a modular category, in particular the invariants of the lens spaces $L(p,q)$, where $(p,q)=1$.  Those invariants are determined by the modular data as follows:

Expand $p/q$ as a continued fraction 

$$p/q = a_n - \cfrac{1}{a_{n-1} - \cfrac{1}{ \phantom{a_{n-2}} \ddots \phantom{\frac{1}{a_3}}
  \cfrac{1}{a_2 - \cfrac{1}{a_1}}}} .$$
Then $L(p,q)$ is obtained as Dehn surgery on the following framed $n$-component link with framings $\{a_i\}$:

$$\btikz{baseline=0}

\begin{scope}[thick,scale=.55]
\draw (0,0) arc (0:-30:1);
\draw (0,0) arc (0:300:1);

\draw (1.375,0) arc (0:120:1);
\draw (1.375,0) arc (0:-30:1);
\draw (.875,-.875) arc (-60:-210:1);

\draw (1.75,1) arc (90:120:1);
\draw (.875,.5) arc (150:270:1);

\draw (2.5,0) node {\large$\cdots$};

\draw (4.25,0) arc (0:90:1);
\draw (4.25,0) arc (0:-30:1);
\draw (3.75,-.875) arc (-60:-90:1);

\draw (5.625,0) arc (0:120:1);
\draw (5.625,0) arc (0:-210:1);
\end{scope}

\etikz$$
Asigning the Kirby color $\omega_0$ to each component, we obtain:
$$Z(L_{(p,q)})=\frac{e^{-i \frac{\pi c}{4}\sigma(L)}}{D^{n+1}}\sum_{x_1,...,x_n}(\prod_{j=1}^n d_{x_j} (\theta_{x_j})^{a_j})\frac{S_{x_1x_2}}{d_{x_2}}\frac{S_{x_2x_3}}{d_{x_3}}\cdots \frac{S_{x_{n-2}x_{n-1}}}{d_{x_{n-1}}} S_{x_{n-1}x_n},$$ where $c$ is the chiral central charge, and $\sigma(L)$ the signature of the matrix:

$$
\begin{pmatrix}
a_1&-1&0&...&0&0&0\\
-1&a_2&-1&...&0&0&0\\
0&-1&a_3&-1&0&...&0\\
...&...&...&...&...&...&...\\
...&...&...&...&...&a_{n-1}&-1\\
0&0&...       &0&0   &-1&a_n
\end{pmatrix}
$$

\begin{proof}
The invariant $Z(L_{(p,q)})$, calculated by attaching the Kirby color $\omega_0$ onto each component \cite[Thm. 4.22]{ZW10}.  Expanding the formal sums leads to the above formula.
\end{proof}

\subsection{Combination of $F$ symbols}

All invariants of anyon models are polynomials equations of the $F$ and $R$ symbols.  However, concrete formulas can be complicated, as expressions of link invariants demonstrate. Also, it appears difficult to determine whether general intrinsic information, such as  $\sum_{\mu\nu}[F^{a,b,a}_{b}]_{b\mu\nu,b\nu\mu}$ or whether a certain (gauge-invariant) entry of an $F$-matrix is zero, is related to the triple $(S,T,W)$. Therefore, it might be challenging to discover a simple set of complete invariants of modular categories.

\section{Link invariants from Dijkgraaf-Witten TQFTs}

We first list some notations that are used in the sequel.

\begin{enumerate}

    \item $G(q,p;n)=\Z_q\rtimes_n \Z_p$, where $n\neq 1$ and $n^p=1 \mod q$, denotes the semi-direct product group of order $pq$. All such choices of $n$ give rise to isomorphic groups.
    \item $[{}^Gt]=\{gtg^{-1}\}$ is the conjugacy class of $t$ in $G$ (we sometimes denote this simply as $[t]$).
    \item $\mathcal{R}_G=\{t\}$ is a full representative set of conjugacy classes of $G$ so $G=\cup_{t\in \mathcal{R}_G}[{}^Gt]$.
    \item Let $C_G(t)=\{g|gt=tg\}$ be the centralizer of $t$ in $G$.  
    \item Let $R_t=\{r_{1,t},...,r_{n,t}\}=\{r_{i,t}\}$ with $r_{1,t}=e$ be a full representative set of $G/C_G(t)$ such that $[{}^Gt]=\{t_1,...,t_n\}$ with $t_1=t, t_i=r_{i,t}tr_{i,t}^{-1}$, where $e$ is the unit of $G$.
    \item $\omega\in Z^3(G;\U(1))$ is a normalized $3$-cocyle of a finite group $G$ with $\U(1)$ coefficient.
    \item $\vgo$ is the fusion category of $G$-graded vector spaces.
    \item $D^\omega(G)$ the Drinfeld double of a finite group $G$, which is a quasi-triangular quasi-Hopf algebra.  Physicists use the same notation for its representations category $\mcZ(G,\omega)$. In this paper, we sometimes use $\mathcal{D}^\omega(G)$ to denote its representation category.
    \item $\mcZ(\vgo)$ is the representation category of $D^\omega(G)$, which is modular and the same as the Drinfeld center of $\vgo$ and $\textrm{Rep}(\C^\omega [G])$.  The simple objects of $\mcZ(G,\omega)$ are denoted as $([t],\chi)$.
    \item $\mcZ(G,\omega)$, which is equivalent to $\mcZ(\vgo)$ as a modular category, is regarded as a concrete representation category of $D^\omega(G)$ whose irreducible representations (irreps) $V([t],\chi)$ corresponding to $([t],\chi)$ have bases $\{|r_{i,t}\ra \otimes |v_{a,t}\ra\}$ of dimension=$|[{}^Gt]|\textrm{dim}(V_\chi)$, where $|v_{a,t}\ra$ is a basis of the projecitve irrep $V_\chi$ labeled by $\chi$.  We will drop $\otimes$ if no confusion arises.
    
\end{enumerate}

In this section, we provide an exposition of the colored link invariants from the Dijkgraaf-Witten (DW) TQFTs and their computation from representations of colored braid groups based on quasi-triangular quasi-Hopf algebras $D^\omega(G)$ for finite groups\footnote{The DW theory with the trivial $3$-cocyle $\omega$ corresponds to the usual discrete gauge theory.  When the $3$-cocycle $\omega$ is non-trivial,  the corresponding theory is usually called twisted or deformed discrete gauge theory.  To save words, we will refer to all of them as discrete gauge theories.} $G$ \cite{DW90,DPR90}.  Such colored link invariants can be computed from two different representations of the braid groups: the anyonic representations using $F$ and $R$ symbols and fusion tree basis, and the representations from discrete gauge theories.  Those two braid group representations, referred to as the anyonic and discrete gauge theory ones, respectively, are not the same, but equivalent to each other in the sense of quasi-localization  \cite{RW12,GHW12}.  The discrete gauge theory representation is explicitly local, in that each braid strand has its own attached Hilbert space, while the anyonic state space representation is generally not a tensor product.  The discrete gauge theory representation is a quasi-localization of the anyonic representation such that the constituent irreps of both braid group representations are the same as a set without multiplicities.  To compute the colored link invariants from the two representations, we use the Markov trace for the anyonic representations, but simply the linear algebra trace for the discrete gauge theory ones.

\subsection{Two braid group representations from doubles of finite groups}

The topological representations are the anyonic ones, and the discrete gauge theory ones are the local representations from the quasi-triangular quasi-Hopf algebra $D^\omega(G)$. The new invariants for modular categories that we propose can be computed as link invariants from the associated DW TQFTs.

To avoid confusion, we will denote the same modular category from doubles $D^\omega(G)$ of finite groups $G$ and their simple objects differently. When regarded as an anyon model, we will denote the modular category of the representations of $D^\omega(G)$ as $\mcZ(\vgo)$, and each irrep of $D^\omega(G)$ is an anyon.  Since the irreps are used as labels of anyons, they are denoted as pairs $([t], \chi)$, where $[t]$ represents a conjugacy class $[{}^Gt]$, and $\chi$ is some projective irrep of the centralizer $C_G(t)$ of $t$.  We consider $\chi$ as a projective character of the relevant group, and use it interchangeably as the irrep that it determines:  $\pi_\chi$ the irrep as a group homomorphism or $V_\chi$ the irrep as a module or sometimes even $V_{\pi_\chi}$.   In the discrete gauge theory setting, a basis of the irreps $([t],\chi)$ is often used to represent, in physical jargon, the dyonic (i.e. flux and charge) degrees of freedom, so we will denote the irrep as $V([t],\chi)$ with a basis $|r_{i,t}\rangle |v_{a,t,\chi}\rangle$, where $|r_{i,t}\rangle$ with $r_{i,t}\in R_t$ is a basis of $\C[R_t]$ and $|v_{a,t,\chi}\rangle$ is a basis of $V_\chi$. 

The anyonic representations of the braid groups have the same asymptotic dimensions as the discrete gauge theory ones, but are smaller in general.  Consider, for example, the $D(S_3)$ theory for the two strand braid groups $B_2$ labeled by the anyon $C$ of quantum dimension=$2$. Since $C\otimes C=A\oplus B \oplus C$, the anyonic representation of $B_2$ is $3$-dimensional corresponding to the three fusion channels $A,B,C$, while the discrete gauge theory one is $4$-dimensional because the irrep $C$ is $2$-dimensional. The relation between the anyonic and dyonic pictures is understood. The former is recovered from the later by imposing the corresponding gauge invariance, i.e. restricting to the subset of state space and operators that respect the gauge ``symmetry." For example, pair creation of $C$ and $C$ from vacuum does not give two independent $2$-dim irreps, but rather the state $\sum_{v \in C} |v\ra |v\ra$, and there is no way to physically ``look inside" the irrep $C$ carried locally by either anyon, i.e there are no such topological states as $|v\ra$ by itself.

The anyonic representations of the colored braid groups describe the statistics of anyons in the anyon model, whose computation requires the full set of the $F$ and $R$ symbols.  There are very few modular categories for the DW theories for which the full set of $F$ and $R$ symbols are known.  Besides the pointed ones, the only non-Abelian anyon models that authors know their full set of $F$ and $R$ symbols are the $D(S_3)$ theories \cite{MTthesis}.  If we can find all the $F$ and $R$ symbols of the Mignard-Schauenburg examples, then the punctured $S$-matrices can be calculated and shown that they are invariants beyond the modular data.  Since we do not know the full set of $F$ and $R$ symbols for the Mignard-Schauenburg examples, in the following we will not discuss their anyonic representations of the braid groups unless stated explicitly.

\subsubsection{Operator invariants of ribbon graphs and link invariants}

A far-reaching generalization of the Jones representation of the braid groups and the Jones polynomial of knots is via ribbon category theory \cite{TuBook}.  Given any ribbon category $\cB$, there is an associated tensor functor $\mathcal{T}_{\cB}$ from the category of $\cB$-colored framed tangles to $\cB$.
If the tangle $L$ is a framed closed link, then (as a morphism) $\mathcal{T}_\cB(L)\in \textrm{Hom}(\uno,\uno)$, where $\uno$ is the tensor unit of $\cB$. Hence, $\mathcal{T}_\cB(L)=\lambda \cdot \textrm{id}_\uno$ and the complex number $\lambda$ is the link invariant.
In \cite{AC92}, the authors describe such an operator invariant from any quasi-triangular quasi-Hopf algebra $\mathcal{A}$.  We refer the readers to \cite{AC92} for the general theory.  In the following, an outline of the computation of such operator invariants for colored braids is presented following \cite{AC92}.

A coloring of a braid $b$ is an assignment of an irrep $V_i$ of $\mathcal{A}$ to each strand of $b$.  Since the representation category of $\mathcal{A}$ is, in general, not strict and not strictified, an order of the braid strand is needed to parenthesize the tensor products of $\{V_i\}$.  By default, we will always parenthesize all left $((V_1\otimes V_2)\otimes V_3)...)))$.
The operator invariant of the braid $b$ will be an intertwiner operator of $((V_1\otimes V_2)\otimes V_3)...)))$.  The braid $\sigma_i$ is implemented on the tensor product with the default all left parenthesis and let $c_{V_i,V_{i+1}}$ denote the operator associated with the braid generator $\sigma_i$.  In practice, before applying $c_{V_i,V_{i+1}}$, the associators are used to prepare the parenthesis into the default order first.  Let $\phi=\sum \phi^1\otimes \phi^2 \otimes \phi^3 \in \mathcal{A}\otimes \mathcal{A}\otimes \mathcal{A}$ be the associator and $\Delta$ the comultiplication.  Define $\psi_i=\Delta^{i-2}(\phi)\otimes {\textrm{id}}^{\otimes (n-i-1)}$ on $((V_1\otimes V_2)\otimes V_3)...V_n)))$, then $\psi_i$ determines a parenthesis that the $i$th and $(i+1)$th spaces are parenthesized together and the first $(i-1)$ spaces are parenthesized all left for $\Delta^{(i-2)}$ so that $\Delta$ acts on $\phi^1$ consecutively if $i\geq 3$.  The parenthesis is extended to the remaining tensor factors by parenthesis all left first after the left $(i+1)$ tensor factors, but it does not matter as $\psi$ is the identity operator afterwards.  Then $$c_{V_i,V_{i+1}}=\psi_i^{-1}\cdot P\cdot R\cdot \psi_i$$ if $i\geq 3$, where $P$ is the flipping of two tensor factors. For $i=1$ and $2$, $c_{V_1,V_{2}}=P\cdot R$, and $c_{V_2,V_{3}}=\phi^{-1} \cdot P\cdot R \cdot \phi$.

\subsubsection{Operator invariants of braids and links from discrete gauge theory}

In this section, the detail for the computation of the operator invariants of colored braids from the doubles of finite groups is provided in order to obtain the $W$-matrix.

Let $G$ be a finite group with a normalized $3$-cocycle $\omega \in Z^3(G,\U(1))$, i.e. $\omega(g,h,k)=1$ if any of $g,h,k$ is the group unit $e\in G$. The twisted double $D^\omega(G)=F(G)\rtimes \C[G]$ is a quasi-triangular quasi-Hopf algebra \cite{DPR90}.  

As a vector space, $D^\omega(G)$ is simply $F(G)\otimes \C[G]$ of dimensions $|G|^2$, where $F(G)$ is the abelian algebra of functions of $G$ and $\C[G]$ the group algebra.  The constant function on $G$ with value $=1$ is simply denoted as $1$, which is the unit of $F(G)$ and equal to $\sum_{g\in G}\delta_g$, where $\delta_g(h)=\delta_{g,h}$ is the delta function on $G$. The basis elements of the vector space $F(G)\otimes \C[G]$ will be denoted as $P_xy$, which is really $\delta_x\otimes y$, for $x,y\in G$.  

The action of $G$ on $F(G)$ is encoded in the multiplication $$P_gxP_hy=\delta_{g,xhx^{-1}}\theta_{g}(x,y)P_g (xy)$$ by an induced $2$-cochain:
\begin{equation}
  \theta_g(x,y)=\frac{\omega(g,x,y)\omega(x,y,(xy)^{-1}gxy)}{\omega(x,x^{-1}gx,y)}.  
\end{equation}

The comultiplication of $D^\omega(G)$ is given as
$$\Delta(P_gh)=\sum_{xy=g}\gamma_{h}(x,y) P_xh\otimes P_yh,$$ 
where the $2$-cochain $\gamma_h(x,y)$ is 
\begin{equation}
    \gamma_{h}(x,y)=\frac{\omega(x,y,h)\omega(h,h^{-1}xh,h^{-1}yh)}{\omega(x,h,h^{-1}yh)}.
\end{equation}

The unit of $D^\omega(G)$ is $P_1e=1\otimes e=\sum_{g\in G}P_ge$, and the associativity for left default parenthesis is given by the invertible element $\phi \in (D^\omega(G))^{\otimes 3}$ such that  
\begin{equation}
   \phi=\sum_{g,h,k}\omega(g,h,k)^{-1}P_ge\otimes P_he\otimes P_ke. 
\end{equation}

$D^\omega(G)$ has a universal $R$-matrix $R$ given by 
\begin{equation}
   R=\sum_{g,h}P_ge\otimes P_hg,
\end{equation}
and its inverse \cite{ERW08} is
\begin{equation}
  R^{-1}=\sum_{g,h}\theta_{ghg^{-1}}(g,g^{-1})^{-1}P_ge\otimes P_hg^{-1}.  
\end{equation}

It is well-known that irreps of $D^\omega(G)$ are labeled by pairs $([t],V_{\pi_t})$, where $V_{\pi_t}$ is a $\theta_t$-projective irrep of $C_G(t)$ (which is also denoted as $V_{\chi_t}$ sometimes).
An explicit description of the operator invariants of colored braids requires specific bases of those irreps.  Suppose $V_{\pi_t}$ is a $\theta_t$-projective irrep of $C_G(t)$ so that 

\begin{equation}\label{piequ}
     \pi_t(xy)=\theta_t(x,y)^{-1}\pi_t(x)\pi_t(y),
\end{equation}
 and $|v_{a,t}\ra$ is a basis of $V_{\pi_t}$.  The irrep of $D^\omega(G)$ labeled by the pair $([t],V_{\pi_t})$ is induced from the $\theta_t$-projective irrep $V_{\pi_t}$ of $C_G(t)$, called the DPR induction \cite{DPR90}.  The induction to an irrep $\C[R_t]\otimes V_{\pi_t}$ of $D^\omega(G)$ uses $|r_{i,t}\ra |v_{a,t} \ra$ as an explicit basis of the  
induced irrep of $D^\omega(G)$.  The amazing formulas for the DPR induction can be found in \cite{ACM04}:

\begin{proposition}\label{prop:Induction}
The DPR induction of a $\theta_t$-projective irrep $\pi_t$ of $C_G(t)$ to $D^\omega(G)$ is given by the following action of $P_xy$ on the basis $|r_{i,t}\ra |v_{a,t} \ra$:

\begin{equation}
    P_xy(|r_{i,t}\ra |v_{a,t} \ra)=\delta_{x,r_{j,t}tr_{j,t}^{-1}}\theta_x(y,r_{i,t})\theta_x(r_{j,t},s_t)^{-1}(|r_{j,t}\ra \cdot \pi_t(s_t)|v_{a,t} \ra),
\end{equation}

where $s_t\in C_G(t)$ such that $yr_{i,t}=r_{j,t}s_t$.

\end{proposition}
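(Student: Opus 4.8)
The plan is to verify directly that the prescribed formula defines a left $D^\omega(G)$-module structure on $W:=\C[R_t]\otimes V_{\pi_t}$ with basis $\{|r_{i,t}\ra|v_{a,t}\ra\}$, and then to identify $W$ with the DPR-induced irrep labeled by $([t],\pi_t)$. First I would record the reduction $P_xy=(P_xe)\,(1\otimes y)$, where $1\otimes y=\sum_h P_hy$; this follows from the multiplication rule together with the normalization of $\omega$, which gives $\theta_x(e,y)=1$. Thus $P_xe$ acts as the flux projector and $1\otimes y$ implements the gauge translation. Reading off the formula with $y=e$ one finds that $P_ge$ acts diagonally with eigenvalue $\delta_{g,\,r_{i,t}tr_{i,t}^{-1}}$, so the $G$-flux carried by $|r_{i,t}\ra|v_{a,t}\ra$ is $r_{i,t}tr_{i,t}^{-1}$; and $1\otimes y$ sends this state into the $r_{j,t}$-sector determined by $yr_{i,t}=r_{j,t}s_t$ with $s_t\in C_G(t)$, acting on the charge by $\pi_t(s_t)$ up to the scalar $\theta_x(y,r_{i,t})\theta_x(r_{j,t},s_t)^{-1}$ with $x=r_{j,t}tr_{j,t}^{-1}$.

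The substance of the proof is checking that the prescription respects the algebra multiplication, i.e. that $\rho(P_gx)\rho(P_hz)=\rho(P_gx\cdot P_hz)$ on each basis vector. Applying the formula twice, with $zr_{i,t}=r_{k,t}s'_t$ and then $xr_{k,t}=r_{j,t}s''_t$, the left-hand side produces a product of four induced $2$-cochains and the composite $\pi_t(s''_t)\pi_t(s'_t)$ on the charge; the right-hand side, computed from $P_gx\cdot P_hz=\delta_{g,xhx^{-1}}\theta_g(x,z)P_g(xz)$ and $(xz)r_{i,t}=r_{j,t}(s''_ts'_t)$, produces a single cochain and $\pi_t(s''_ts'_t)$. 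The two sides are matched using two ingredients: the twisted $2$-cocycle identity
$$\theta_g(x,y)\,\theta_g(xy,z)=\theta_{x^{-1}gx}(y,z)\,\theta_g(x,yz),$$
which is precisely the associativity of the multiplication of $D^\omega(G)$ and hence a consequence of the $3$-cocycle condition on $\omega$, and the $\theta_t$-projectivity $\pi_t(ab)=\theta_t(a,b)^{-1}\pi_t(a)\pi_t(b)$, which supplies the compensating factor $\theta_t(s''_t,s'_t)$ whenever two centralizer elements are merged. The Kronecker deltas in the flux labels propagate consistently because conjugation by $x$ carries the class $[{}^Gt]$ to itself.

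Finally I would identify $W$ as the correct simple object. The flux spectrum of $\{P_ge\}$ is exactly the conjugacy class $[{}^Gt]$, and on the $r_{1,t}=e$ sector the formula reduces, for $y\in C_G(t)$, to $(1\otimes y)\,|e\ra|v_{a,t}\ra=\theta_t(y,e)\theta_t(e,y)^{-1}\,|e\ra\,\pi_t(y)|v_{a,t}\ra=|e\ra\,\pi_t(y)|v_{a,t}\ra$, so $C_G(t)$ acts through $\pi_t$; hence $W$ is the module induced from $\pi_t$, of dimension $|[{}^Gt]|\dim V_{\pi_t}$, matching the DPR classification, and irreducibility follows from that of $\pi_t$ by a Mackey/Clifford argument (the flux sectors are permuted transitively by $G$, with isotropy $C_G(t)$ acting irreducibly on a single sector), so $W\cong V([t],\pi_t)$. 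I expect the main obstacle to be the middle step: organizing the four induced $2$-cochains so that the higher associativity data of the quasi-Hopf algebra cancels and only the stated factors $\theta_x(y,r_{i,t})\,\theta_x(r_{j,t},s_t)^{-1}$ survive. This is pure but error-prone cocycle bookkeeping, and the cleanest way to control it is to apply the twisted cocycle identity above with the specializations dictated by the coset arithmetic $yr_{i,t}=r_{j,t}s_t$.
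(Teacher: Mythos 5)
Your proof strategy is sound, and it is worth noting that the paper itself offers \emph{no} proof of this proposition: the formula is simply quoted from Altschuler--Coste--Maillard \cite{ACM04} (``the amazing formulas for the DPR induction can be found in \cite{ACM04}''), so your argument is a genuinely self-contained alternative rather than a variant of the paper's route. The verification you outline does go through. The reduction $P_xy=(P_xe)(1\otimes y)$ is valid because normalization of $\omega$ forces $\theta_g(e,y)=\theta_g(x,e)=1$. For compatibility with multiplication, writing $zr_{i,t}=r_{k,t}s'_t$ and $xr_{k,t}=r_{j,t}s''_t$, both sides reduce (on the support of the deltas, with $g=r_{j,t}tr_{j,t}^{-1}$ and $h=x^{-1}gx$) to the single identity
\begin{equation*}
\theta_h(z,r_{i,t})\,\theta_h(r_{k,t},s'_t)^{-1}\,\theta_g(x,r_{k,t})\,\theta_g(r_{j,t},s''_t)^{-1}
=\theta_g(x,z)\,\theta_g(xz,r_{i,t})\,\theta_g(r_{j,t},s''_ts'_t)^{-1}\,\theta_t(s''_t,s'_t)^{-1},
\end{equation*}
and this follows from exactly three applications of the twisted cocycle identity you quote: once to merge $\theta_g(x,z)\theta_g(xz,r_{i,t})$ into $\theta_g(x,r_{k,t}s'_t)\theta_h(z,r_{i,t})$, once to split $\theta_g(x,r_{k,t}s'_t)$ along $xr_{k,t}=r_{j,t}s''_t$, and once to split $\theta_g(r_{j,t}s''_t,s'_t)$, the last producing precisely the factor $\theta_t(s''_t,s'_t)$ that projectivity of $\pi_t$ requires (here one uses $r_{j,t}^{-1}gr_{j,t}=t$). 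The delta bookkeeping is consistent because $s''_t\in C_G(t)$ gives $xhx^{-1}=g$, as you say. Your identification step is also correct: the flux spectrum is $[{}^Gt]$, the $r_{1,t}=e$ sector carries $\pi_t$ as a $C_G(t)$-representation, transitivity of the sector permutation together with irreducibility of $\pi_t$ gives irreducibility of $W$, and the DPR parametrization of irreps of $D^\omega(G)$ by pairs $([t],\pi_t)$ then pins down $W\cong V([t],\pi_t)$. What the paper's citation buys is brevity; what your argument buys is an explicit record of exactly which structural facts are used---associativity of $D^\omega(G)$ (equivalently the twisted $2$-cocycle identity, itself a consequence of the $3$-cocycle condition) and $\theta_t$-projectivity---which is the same bookkeeping one needs anyway to carry out the braid computations later in the section.
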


The computation of the $W$-matrix needs only the operator invariants for the $2$ and $3$ strand braids, therefore, only these cases will be provided in full detail.

The associativity $\phi$ acts as 

\begin{equation}
  (|r_{i,x}v_{a,x}\ra|r_{j,y}v_{b,y}\ra) |r_{k,z}v_{c,z}\ra=\omega(x_i,y_j,z_k)^{-1} |r_{i,x}v_{a,x}\ra (|r_{j,y}v_{b,y}\ra |r_{k,z}v_{c,z}\ra).  
\end{equation}

The action of the braid $\sigma_i, i=1,2$, on two irreps $([x],V_{\pi_x})\otimes ([y],V_{\pi_y}), x,y\in G$ with basis $|r_{i,x}\ra |v_{a,x}\ra \otimes |r_{k,y}\ra |v_{b,y}\ra$ is through $P\cdot R$, where $P$ is the flip of two tensor factors.

Note that the universal $R$-matrix acts non-trivially only when $g_R=r_{i,x}xr_{i,x}^{-1}=x_i, h_R=r_{l,y}yr_{l,y}^{-1}=y_l$, where $l$ and $s_{y_{kl}}$ are uniquely determined by $x_ir_{k,y}=r_{l,y}s_{y_{kl}}$.  Therefore, 

\begin{equation}
    c_{x,y}: |r_{i,x}\ra |v_{a,x}\ra \otimes |r_{k,y}\ra |v_{b,y}\ra \rightarrow \Omega_+(x,y,i,k)|r_{l,y}\ra|v_{b,y}\ra \otimes |r_{i,x}\ra|v_{a,x}\ra,
\end{equation}
where 
\begin{equation}
\Omega_+(x,y,i,k)=\theta_h(g,r_{k,y})\theta_h(r_{l,y},s_{y_{kl}})^{-1}\pi_y(s_{y_{kl}})    
\end{equation} 
if $V_\pi$ are all $1$-dimensional.  Otherwise, $\pi_y(s_{y_{kl}})$ applies to the basis $|v_{b,y}\ra$ resulting in a linear combination.

To implement the inverse braiding, we apply $R^{-1}\cdot P^{-1}$---the inverse of $P\cdot R$---to $([y],V_{\pi_y})\otimes ([x],V_{\pi_x}), y,x \in G$ with basis $|r_{l,y}\ra|v_{b,y}\ra \otimes |r_{i,x}\ra |v_{a,x}\ra,$ so the inverse universal $R$-matrix $R^{-1}$ is applied to 
$$|r_{i,x}\ra |v_{a,x}\ra \otimes |r_{l,y}\ra |v_{b,y}\ra,$$
It follows that

\begin{equation}
    c_{x,y}^{-1}:|r_{l,y}\ra |v_{b,y}\ra \otimes |r_{i,x}\ra |v_{a,x}\ra \rightarrow \Omega_-(y,x,l,i)|r_{i,x}\ra |v_{a,x}\ra \otimes |r_{k,y}\ra |v_{b,y}\ra,
\end{equation}
where $k,s_{y_{lk}}$ are uniquely determined by 
$x_i^{-1}r_{l,y}=r_{k,y}s_{y_{lk}}$, and

\begin{equation}
    \Omega_-(y,x,l,i)=\theta_{ghg^{-1}}(g,g^{-1})^{-1} \theta_h(g^{-1},r_{l,y})\theta_h(r_{k,y},s_{y_{lk}})^{-1}\pi_y(s_{y_{lk}}).
\end{equation}
In this case, 
$g_{R^{-1}}=r_{i,x}xr_{i,x}^{-1}=x_i,h_{R^{-1}}=r_{k,y}yr_{k,y}^{-1}=y_k.$

\section{Towards a complete invariant of modular categories}

In this section, we prove the following:
\begin{theorem}
Neither the $W$-matrix nor the set of all punctured $S$-matrices of a modular category is determined by its modular data $(S,T)$ in general. 
\end{theorem}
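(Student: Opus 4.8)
The plan is to produce a pair of inequivalent modular categories that share identical modular data $(S,T)$ yet have distinct $W$-matrices, and then to deduce the punctured $S$-matrix half of the statement from Proposition \ref{Wmatrixproperties}. First I would invoke the Mignard-Schauenburg counterexamples \cite{Mignard-Shauenburg}: these are pairs of inequivalent modular categories of the form $\mcZ(\vgo)$, for suitable groups $G$ (such as $G(q,p;n)$ equipped with appropriate $3$-cocycles $\omega$), whose modular data agree. Fix such a pair $\cB_1, \cB_2$.

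Next I would realize the Whitehead link as the closure of a short $3$-strand braid word $\beta$, so that the entry $\widetilde{W}_{ab}$ is computed as the (quantum/Markov) trace of the operator invariant of $\beta$ under the coloring by the simple objects $a$ and $b$. To evaluate this trace concretely, I would use the explicitly local discrete gauge theory representation of the colored braid group developed in Section 3: the braiding operators $c_{x,y}$ and $c_{x,y}^{-1}$ with coefficients $\Omega_\pm$, the associator $\phi$, and the DPR induction formulas of Proposition \ref{prop:Induction}. Carrying out this computation for both $\cB_1$ and $\cB_2$ yields the two $\widetilde{W}$-matrices, from which the symmetric matrices $W_{ab}=\frac{\theta_a}{\theta_b}\widetilde{W}_{ab}$ are formed. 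The objective is then to verify that $W(\cB_1)\neq W(\cB_2)$, despite the coincidence of their modular data.

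Finally, the statement about punctured $S$-matrices follows formally. By Proposition \ref{Wmatrixproperties}(4), the $W$-matrix is determined by the punctured $S$-matrices together with $(S,T)$; equivalently, agreement of $(S,T)$ and of the collections of punctured $S$-matrices would force agreement of $W$. Hence if $\cB_1$ and $\cB_2$ possess the same modular data but different $W$-matrices, their collections of punctured $S$-matrices must differ as well. This delivers both halves of the theorem at once.

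The principal obstacle lies in the explicit evaluation of $W$ for the Mignard-Schauenburg examples. One must track the projective characters $\chi$ of the centralizers $C_G(t)$, the induced $2$-cochains $\theta_g$ and $\gamma_h$ coming from the $3$-cocycle $\omega$, and the coset-representative combinatorics of the $r_{i,t}$ through the braiding formulas, and then assemble everything into a trace over a state space that generically carries fusion multiplicities. The computation is delicate precisely because the two categories differ only in fine cohomological data that is, by construction, invisible to $(S,T)$; the calculation must therefore be exact enough to detect a discrepancy that the modular data cannot see. This is exactly the content carried out for the simplest such example in Section 4.
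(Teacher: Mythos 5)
Your overall strategy coincides with the paper's: take the Mignard-Schauenburg categories, compute the $W$-matrix via the discrete gauge theory braid group representations of Section 3, and obtain the punctured-$S$-matrix half of the statement from Proposition~\ref{Wmatrixproperties}(4). That last reduction is exactly right: since $W_{ab}=\frac{\theta_a D^2}{\theta_b d_a}\sum_{x,\mu} S_{bx}^* S^{(x)}_{(a\mu)(a\mu)}$, agreement of $(S,T)$ together with agreement of the punctured $S$-matrices would force agreement of $W$, so a discrepancy in $W$ propagates back to the punctured $S$-matrices.

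There is, however, a genuine gap in your middle step. You propose to ``verify that $W(\cB_1)\neq W(\cB_2)$,'' i.e.\ to compare the two $W$-matrices entrywise. This is not sufficient, because $W$ is a matrix indexed by a \emph{chosen labeling} of the simple objects, and the modular data of the two Mignard-Schauenburg categories agree only after a nontrivial permutation of labels: for $G(11,5;4)$ with $u=1$ versus $u=4$, matching $T$ already forces, e.g., $B_{1,0}^{(1)}$ to map to $B_{2,1}^{(4)}$ or $B_{3,1}^{(4)}$, and $A_{1,4}^{(1)}$ to map to $A_{1,4}^{(4)}$ or $A_{2,2}^{(4)}$. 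To prove that $W$ is not determined by $(S,T)$ one must show that \emph{no} bijection of simple objects identifying the modular data also identifies the $W$-matrices; an entrywise discrepancy in one particular labeling proves nothing, as it could be an artifact of how the anyons were labeled. This quantification over permutations is precisely the content of Sect.~\ref{nomatch}: the closed formula \eqref{WABblock} for the $BA$-block of $W$ shows that any permutation matching the $W$-matrices must send $A_{1,4}^{(1)}$ to $A_{1,1}^{(4)}$ or $A_{2,6}^{(4)}$, contradicting the constraint imposed by $T$. Your plan needs this (or an equivalent) permutation analysis to close; without it, the argument as stated does not establish the theorem.
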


The theorem follows from the observation that the $W$-matrix distinguishes some counterexamples in \cite{Mignard-Shauenburg} which have the same modular data. This provides a different proof that modular data do not uniquely determine a modular category.  More specifically, the theorem follows from Sect.~\ref{nomatch} and Prop.~\ref{Wmatrixproperties}.

\subsection{Mignard-Schauenburg modular categories}\label{Main example}

The Mignard-Schauenburg modular categories (MS-MCs) $\mcZ(\vgou)$ for $G=D^\omega(\Z_q\rtimes_n \Z_p)$ are modular categories that go beyond modular data.  

The MS-MCs are the representation categories of doubles $D^\omega(\Z_q\rtimes_n \Z_p)$ of $G(q,p;n)=\Z_q\rtimes_n \Z_p$ and $\omega \in Z^3(G(q,p;n);\U(1))$, where $p,q$ are odd primes such that $p|(q-1)$ and $n$ is such that $n^p=1 \mod q$ and $n\neq 1$.
The generator of $\Z_p$ acts on the generator of $\Z_q$ by multiplication of the element $n\in \mathbb Z_q$ of multiplicative order $p$. Elements of $G$ are of the form $(a^l,b^m)\equiv a^lb^m$ and multiplication is given by
$$(a^l,b^m)\cdot (a^{l'},b^{m'})=(a^{l} (b^m a^{l'}b^{-m}),b^{m}b^{m'})=(a^{l+n^m l'},b^{m+m'}).$$

There are $p$ distinct modular categories $\mcZ(\vgou)=\textrm{Rep}(D^{\omega^u}(G))$, labeled by $u=0,\cdots ,p-1$, where $\omega^u$ are 3-cocycles representing the $p$ different cohomology classes of $H^3(G(q,p;n),\U(1))\cong \Z_p$. On the other hand, there are only $3$ distinct sets of different modular data (when allowing relabeling of simple objects).  In this section, we first compute the basic invariants of those modular categories, and then the simplest example $G(11,5;4)$ is studied.

The Drinfeld center $\mcZ(\textrm{Vec}_G^{\omega^u})$ has simple objects labeled by pairs $([t],\chi_t)$ where $t$ is a representative of the conjugacy class $[t]$ and $\chi_t$ is a irreducible $\omega^u_g$-projective representation of the centralizer $C_G(t)$. 

For the case $G(11,5;4)$, there are $\frac{q^2-1+p^3}{p}=49$ simple object types. The seven conjugacy classes, their centralizers and their sizes are
\begin{equation}
	\begin{array}{c|c|c}
	\textrm{Conj. Class} & \textrm{Centralizer} & |\textrm{Cong. Class}|\cdot |\textrm{Centralizer}|\\ \hline
	\left[1\right] & G & 1 \cdot 55\\
	\left[ a^1 \right] & \Z_{11} & 5 \cdot 11\\
	\left[ a^2 \right] & \Z_{11}& 5 \cdot 11\\
	\left[ b^1 \right] & \Z_{5}& 11\cdot 5\\
	\left[ b^2 \right] & \Z_{5}& 11 \cdot 5\\
	\left[ b^3 \right] & \Z_{5}& 11 \cdot 5\\
	\left[ b^4 \right] & \Z_{5}& 11 \cdot 5
	\end{array}
\end{equation}
E.g. $[a^1]=\{a,a^3,a^4,a^5,a^9\}$, $[a^2]=\{a^2,a^6,a^7,a^8,a^10\}$, $[b^k]=\{a^lb^k\}$ for $l=0,\ldots,10$.

We label the simple objects or anyons as $I_{0,...,6}\equiv([1],\chi_j)$, $A_{l,m}\equiv([a^l],\omega_{11}^m)$ and $B_{k,n}\equiv([b^k],\omega_5^n)$.  Besides $\uno$, none of the $49$ simple objects is self-dual, so they form $24$ pairs of dual anyons.

\subsubsection{Quantum dimensions}
Their quantum dimensions are
\begin{equation}
\begin{array}{c|c|c|c}
\textrm{Conj. Class} & \textrm{Irrep} & d=|\textrm{Cl}| \cdot \dim(\textrm{Irrep}) & \textrm{\# of this type} \\ \hline
\left[1\right] & \chi_0,\ldots,\chi_4 & 1\cdot 1=1 & 1\cdot 5=5\\ 
\left[1\right] & \chi_5,\chi_6 & 1\cdot 5=5 & 1\cdot 2=2\\
\left[ a^{1} \right],\left[ a^{2} \right] & \omega_{11}^0,\ldots,\omega_{11}^{10} & 5\cdot 1 = 5 & 2\cdot 11=22\\
\left[ b^{1} \right],\left[ b^{2} \right],\left[ b^{3} \right],\left[ b^{4} \right]  & \omega_5^0,\ldots,\omega_5^4 & 11\cdot 1=11 & 4\cdot 5=20
\end{array}.
\end{equation}
There are a total of 5 abelian anyons and 44 non-abelian ones. The total quantum dimension is 
\begin{equation}
	{D} = |G|=55
\end{equation}

\subsubsection{Topological twists}\label{twists}
The twists can be calculated from Eq. (5.21) of \cite{Coste:2000tq}
\begin{equation}
	\theta_{(g,\chi)} = \frac{\textrm{tr}\chi(g)}{\textrm{tr}\chi(e)} \epsilon_g(g)
\end{equation}
($\epsilon_g(h)$ is the $\mu_{b^k}(x)=\exp(\frac{2\pi i}{p^2} ku [x]_p)$ of \cite{Mignard-Shauenburg}). 

\begin{equation}
\begin{array}{c|c|c|c}
\textrm{Conj. Class} & \textrm{Irrep} & d_{(g,\chi)}& \theta_{(g,\chi)}\\ \hline
\left[1\right] & \chi_{0},...,\chi_4 & 1&1\\ 
\left[1\right] & \chi_5,\chi_6 & 5&1\\
\left[ a^{l=1,2} \right]& \omega_{11}^m &  5&\exp[\frac{2\pi i}{q}lm ]\\
\left[ b^{k=1,2,3,4} \right] & \omega_5^n & 11&\exp[\frac{2\pi i}{p}nk] \exp[\frac{2\pi i}{p^2}k^2u ]
\end{array}.
\end{equation}
The chiral central charge is $\frac{1}{\mathcal{D}} \sum_{a\in D(G)} d_a^2\theta_a =1\implies c_-=(0 \mod 8)$.

Explicitly, the $T$ matrices are given in Table (\ref{explicitT}) in the appendix.

\subsection{Formulas for the $\theta_t(x,y)$-projective characters $\pi_t$}
The operator invariants of colored braids are determined by the $\theta_t(x,y)$-projective representation $\pi_t$ of centralizers.  In our case of $G(11,5;4)$, the irreps of the centralizers are all $1$-dimensional except for the $I$ anyons, which are not important for our discussion.  Therefore, the irreps $\pi_t$'s are simply projective characters.  In this section, we provide formulas for those projective characters.

The $3$-cocyles for the MS-MCs are

\begin{align}
 \omega_u(g,h,k)&=  e^{\frac{2\pi i}{p^2} u[k_b]_p([g_b]_p+[h_b]_p - [g_b+h_b]_p)}\\ &=
\begin{cases}
1, \quad &\text{ if } [g_b]_p+[h_b]_p < p,\\
e^{\frac{2\pi i}{p}u[k_b]_p}, \quad &\text{ if } [g_b]_p+[h_b]_p\geq  p,
\end{cases}       \notag
\end{align}
where $g=a^{g_a} b^{g_b}$ and $[x]_p = x\mod p$.
The centralizer of $b^k$ in $G(q,p;n)$ is $\langle b  \rangle$ and the associated 2-cocycle (in fact a 2-coboundary) is

\begin{align*}
 \theta^u_{b^k}(b^{i_1},b^{i_2})&=\omega_u(b^{i_1},b^{i_2},b^k)\\ &=\begin{cases}
1, \quad & \text{ if } i_1+i_2< p\\
e^{\frac{2\pi i}{p}uk}, \quad & \text{ if } i_1+i_2\geq p,
\end{cases} 
\end{align*}where $i_1, i_2 \in \{0,\ldots p-1\}$.

Given $s\in \{0,1\ldots p-1\}$ define 
\begin{align}\label{projectivesolution}
\pi_{k}^{s}(b^l)=e^{\frac{2 \pi i}{p}sl}e^{\frac{2 \pi i}{p^2}ukl}=e^{\frac{2\pi i}{p^2}(sp+uk )l}, && 0\leq l<p
\end{align} The functions $\pi_{k}^{s}$ satisfy \eqref{piequ}. Thus each pair $(b_k,\pi_{k}^s)$ defines a simple object $B_{k,s}$, that explicitly can be realized over the vector space $V(k,s)=\C[\Z_q]$ and has associated braiding
\begin{equation}\label{formula:braiding}
    c_{V(k,s)}: | x\rangle \otimes |y \rangle \mapsto \pi_k^s(b^k)| (1-n^k)x+n^ky\rangle \otimes |x \rangle,
\end{equation}
and associativity constraint 
\begin{equation}\label{formula:associativity}
\phi: | x\rangle| y\rangle| z\rangle \mapsto \omega_u(b^k,b^k,b^k)^{-1} | x\rangle| y\rangle| z\rangle
\end{equation}

Computing the operator invariants of colored braids needs the evaluation of 
$\pi_{y}(s_y)$ and will be done as follows. First the centralizer $C_G(y)$ of $y$ is calculated, and then its character table is constructed. The linear character is the entry corresponding to the $n$'th irrep evaluated on the conjugacy class associated to $s_y$. To get the projective character associated to a $\theta_t$ factor set Eq. (\ref{projectivesolution}), an additional factor is multiplied onto the linear character.

For the example at hand, there are three possible centralizers, $C_G(e) = G$, $C_G(a^lb^0) = \mathbb Z_{11}$, and $C_G(a^l b^k) = \mathbb Z_{5}$, where $l=0,...,10$ and $k=1,...,4$. Since $\theta_t(x,y)$ is non-trivial only on group elements of the form $a^lb^k$, only the characters from the $\mathbb Z_5$ centralizer are projective.

Let $\xi_m$ be the principal $m$'th root of unity, i.e. $\xi_m= e^{\frac{2\pi i}{m}}$.

The $G$ character table is
\begin{equation}
\begin{array}{c|ccccccc}
C_G(e') = G & [e] & [a^1] & [a^2] & [b^1] & [b^2] & [b^3] & [b^4] 
\\\hline
\chi_0&1 & 1 & 1 & 1 & 1 & 1 & 1 \\
\chi_1&1 & 1 & 1 & \xi_5  & \xi_5 ^2 & \xi_5 ^3 & \xi_5 ^4  \\
\chi_2&1 & 1 & 1 & \xi_5 ^2 & \xi_5 ^4 & \xi_5  & \xi_5 ^3  \\
\chi_3&1 & 1 & 1 & \xi_5 ^3 & \xi_5  & \xi_5 ^4 & \xi_5 ^2 \\
\chi_4&1 & 1 & 1 & \xi_5 ^4 & \xi_5 ^3 & \xi_5 ^2 & \xi_5 \\
\chi_5&5 & \sigma & \sigma^* & 0 & 0 & 0 & 0  \\
\chi_6&5 & \sigma^* & \sigma & 0 & 0 & 0 & 0  
\end{array}
\end{equation}
where $\sigma=e^{\frac{2\pi i}{11} 1}+e^{\frac{2\pi i}{11} 3}+e^{\frac{2\pi i}{11} 4}+e^{\frac{2\pi i}{11} 5}+e^{\frac{2\pi i}{11} 9}$.

The $\mathbb Z_{11}$ character for the $\chi_m$-$[a^l]$ entry is equal to $\xi_{11}^{lm}= e^{\frac{2\pi i}{11} lm}$, where $l,m=0,1,\ldots,10$.

The linear $\mathbb Z_{5}$ character for $\chi_n$-$[b^k]$ is equal to $\xi_5^{nk} = e^{\frac{2\pi i}{5} nk}$.
For this to be $\theta_t(x,y)^u$ projective, $\pi_{t,n}(xy) = \theta_{t}(x,y)^{-u} \pi_{t,n}(x) \pi_{t,n}(y)$, multiply each character by
\begin{equation}
    \mu_t^u(g) = \mu_{a^lb^k}^u(a^{l'}b^{k'}) = e^{\frac{2\pi i}{25} k k'u}
\end{equation}
Then, the $\mathbb Z_5$ \emph{projective} character table is 
\begin{equation}
\begin{array}{c|ccccc}
C_G(t=a^{l'}b^{k'})=\mathbb{Z}_5 & [e] & [b^1] & [b^2] & [b^3] & [b^4] \\ \hline
\chi_0& 1 & 1 & 1 & 1 & 1 \\
\chi_1& 1 & \xi _5 \cdot \mu_t^u(b^1) & \xi _5^2 \cdot \mu_t^u(b^2)& \xi _5^3 \cdot \mu_t^u(b^3)& \xi _5^4 \cdot \mu_t^u(b^4)\\
\chi_2& 1 & \xi _5^2 \cdot \mu_t^u(b^1)& \xi _5^4 \cdot \mu_t^u(b^2)& \xi _5 \cdot \mu_t^u(b^3)& \xi _5^3 \cdot \mu_t^u(b^4)\\
\chi_3& 1 & \xi _5^3 \cdot \mu_t^u(b^1)& \xi _5 \cdot \mu_t^u(b^2)& \xi _5^4 \cdot \mu_t^u(b^3)& \xi _5^2 \cdot \mu_t^u(b^4)\\
\chi_4& 1 & \xi _5^4 \cdot \mu_t^u(b^1)& \xi _5^3 \cdot \mu_t^u(b^2)& \xi _5^2 \cdot \mu_t^u(b^3)& \xi _5 \cdot \mu_t^u(b^4)\\
\end{array}
\end{equation}
The $\chi_n$-$[b^k]$ entry for the $C_G(t=a^lb^{k'})$ table is $\xi_5^{nk} \mu_t^u(b^k) = e^{\frac{2\pi i}{25} (5nk + kk' u)} $. When $u=0$, this reduces to the linear character that was first given.

It follows that the $\pi_t$'s that satisfy Eq. (\ref{piequ}) is

\begin{equation}
    \pi_{t,n,\omega^u}(x)=e^{\frac{2\pi i}{25} (5nk + kk' u)}
\end{equation}
for  $t=a^{l'}b^{k'},x=a^lb^k,$ and
$\theta_{t,u}(x,y)=\omega^u(x,y,t)$ if $C_G(t)=\Z_5$.  Otherwise $\pi_{t,n,\omega^u}$ is the linear character and $\theta_{t,u}(x,y)=1$.

\subsection{Link invariants from MS-MCs $\mcZ(\vgou)$}

\subsubsection{General colored links}
Link invariants can be computed using the operator invariants of colored braids by presenting links as braid closures.  In general, the link invariant would be a Markov trace of the associated matrix.  But for the discrete gauge theory representations, the trace is simply the linear algebra trace as shown in \cite[Sect. 5]{AC92}.  For a particular link $L$ as the braid closure $\hat{b}$, the colors cannot be arbitrary as the same component of the link $L$ has to be colored by the same irrep.

\subsubsection{Invariants of colored Whitehead links}

The colored invariants of the Whitehead link are calculated by computers, and assembled into the $W$-matrices in Appendix \ref{AppendixB}. Later, special attention is paid to the $BA$ block of the $W$-matrix, i.e. the sub-matrix of $W$ coming from labeling one component by anyons of type A and the other by ones of type B. The $BA$ block has a closed formula as follows.
\begin{equation}
W^{(u)}_{B_{k,n}, A_{l,m}} = 55 (-1)^{ lm [k^2]_5}
\cdot\left(\theta_{A_{l,m}}^{\frac{1}{2}[k^2]_5} \cdot\theta^{(u)}_{B_{k,n}}\right)^{-1}
\end{equation}
where $[x]_y = x\mod y$ and our convention for taking roots is $(e^{\frac{2\pi i s}{N}})^{t} = e^{\frac{2\pi i st}{N}}$. $k,n,l,m,u$ take values in $k\in\{1,2,3,4\}$, $n\in\{0,1,2,3,4\}$, $l\in\{1,2\}$, $m\in\{0,\ldots,10\}$, and $u\in\{0,1,2,3,4\}$.

\subsubsection{Links colored by the same anyon}

The counting of group homomorphisms for the DW invariants of links is generalized to links colored by a single irrep of MS-MCs using quandles below.

\begin{definition}\label{quandle}
A \emph{quandle} $X$ is a set with a binary operation $(x, y) \mapsto x\triangleright y$ satisfying:

\begin{enumerate}
    \item for any $x\in X$, $x\triangleright x=x$,
    \item  for any $x, y \in X$, there is a unique $z \in X$ such that $z \triangleright  x = y$,
    \item  for any $x, y, z \in X$, we have $x \triangleright (y\triangleright z)= (x\triangleright y)\triangleright (x \triangleright z)$.
\end{enumerate}
\end{definition}

The prototypical example of a quandle is a conjugacy class of a group $X\subset G$ with operation $g \triangleright h=g^{-1}hg$.

\begin{example}\label{exmple of quandle}
\begin{enumerate}
    \item A left $\Z[t,t^{-1}]$-module is a quandle with $x\triangleright y= (1-t)x+ty$. This quandle is called an Alexander quandle.
    \item Given $G(q,p;n)=\Z_q\rtimes _n\Z_p=\langle a\rangle\rtimes_n \langle b\rangle$, the quandles associated to the conjugacy classes of $b^k,k=0,1,...,p-1$ are Alexander quandles: 
\begin{align*}
    X_k=\Z/q\Z,&&  a\triangleright b:=(1-n^k)a+n^kb 
\end{align*}
for  $a, b \in \Z/q\Z$.
\end{enumerate}
\end{example}

A quandle defines a set-theoretical solution of the braid equation,
\begin{align*}
    c:X\times X\to X\times X,&&  c(x,y)=(x\triangleright y,x).
\end{align*}
Given $q\in \C^{\times}$, the linear map $c^q:\C[X]\otimes \C[X]\to \C[X]\otimes \C[X]$, given by \[c^q(|x\rangle |y\rangle)=q|x\triangleright y\rangle | x\rangle\]is a solution of the braid equation. The associated representation of the braid group will be denoted by $\rho_n^q:\cB_n\to GL(\C[X]^{\otimes n})$.

\begin{proposition}\label{prop:represention braid}
The representation of $\cB_n$ associated to the simple object $V(b^k,\pi_k^s)$ is the representations $\rho_n^q$ defined by the solution of the braid equation associated to the quandle $X_k$ and the scalar $q=e^{\frac{2\pi i}{p^2}(sp+u k)k}$.
\end{proposition}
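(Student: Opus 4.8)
The plan is to show that the two $\cB_n$-representations act on the \emph{same} vector space and agree on the generators $\sigma_1,\dots,\sigma_{n-1}$, by checking that the local braiding operator coincides with $c^q$ and that the quasi-Hopf associator degenerates to a scalar on tensor powers of the single object $V(b^k,\pi_k^s)$. First I would identify the underlying space. Since the centralizer of $b^k$ is $\langle b\rangle\cong\Z_p$, the conjugacy class $[b^k]=\{a^lb^k:l\in\Z_q\}$ has exactly $q$ elements and the projective character $\pi_k^s$ is one-dimensional; hence $V(b^k,\pi_k^s)$ has dimension $q$ with basis $\{|r_{l,b^k}\ra\}$ indexed by $\Z_q$. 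Choosing representatives $r_{l,b^k}=a^l$ and writing $|x\ra$ for the corresponding basis vector identifies this module with $\C[X_k]$, the space on which $\rho_n^q$ acts.

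Next I would compare the braiding on two strands. By \eqref{formula:braiding}, the operator $P\cdot R$ obtained from $D^{\omega^u}(G)$ sends $|x\ra\otimes|y\ra$ to $\pi_k^s(b^k)\,|(1-n^k)x+n^ky\ra\otimes|x\ra$. Because the Alexander quandle structure on $X_k$ from Example \ref{exmple of quandle} is $x\triangleright y=(1-n^k)x+n^ky$, this is precisely $c^q(|x\ra\otimes|y\ra)=q\,|x\triangleright y\ra\otimes|x\ra$ with $q=\pi_k^s(b^k)$. Evaluating the projective character \eqref{projectivesolution} at $l=k$ gives $\pi_k^s(b^k)=e^{\frac{2\pi i}{p}sk}e^{\frac{2\pi i}{p^2}uk^2}=e^{\frac{2\pi i}{p^2}(sp+uk)k}$, which is the asserted scalar $q$.

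The step requiring care, and the only genuine obstacle, is passing from this two-strand identity to equality of the full $\cB_n$-representations. In the quasi-Hopf setting $\sigma_i$ acts not by $P\cdot R$ alone but by $\psi_i^{-1}(P\cdot R)\,\psi_i$, where $\psi_i$ is assembled from the associator $\phi$. I would show that $\phi$ acts on $V(b^k,\pi_k^s)^{\otimes 3}$ as the \emph{scalar} $\omega_u(b^k,b^k,b^k)^{-1}$: all three tensor factors are colored by elements of $[b^k]$, whose $b$-exponents all equal $k$, so the formula for $\omega_u$ shows that $\omega_u(a^{l_1}b^k,a^{l_2}b^k,a^{l_3}b^k)$ depends only on $k$ and is independent of the $\Z_q$-parts $l_1,l_2,l_3$; this is exactly \eqref{formula:associativity}. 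Since the associator of the generating object is a scalar, every rebracketing operator $\psi_i$ acts on $V(b^k,\pi_k^s)^{\otimes n}$ as a scalar by coherence, and these scalars cancel in $\psi_i^{-1}(P\cdot R)\,\psi_i$. Consequently each $\sigma_i$ acts by $c^q$ on the adjacent factors $i,i+1$ and trivially elsewhere, which is precisely the definition of $\rho_n^q$. Once the scalar degeneration of the associator is established, the remaining comparison is a direct matching of formulas.
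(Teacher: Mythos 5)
Your proposal is correct and follows essentially the same route as the paper's proof: both express $\sigma_i$ as an associator-conjugate of the local braiding, observe that the associator acts as the scalar $\omega_u(b^k,b^k,b^k)^{-1}$ on tensor powers of $V(b^k,\pi_k^s)$ (so that the conjugating scalars cancel), and then match the braiding formula \eqref{formula:braiding} with $c^q$ for $q=\pi_k^s(b^k)=e^{\frac{2\pi i}{p^2}(sp+uk)k}$. Your explicit identification of the module with $\C[X_k]$ via the representatives $r_{l,b^k}=a^l$ and your verification that $\omega_u$ depends only on the $b$-exponents are details the paper leaves implicit, but the argument is the same.
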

\begin{proof}
The representation of $\cB_n$ on
\[V^{* n}:= (\cdots (V\otimes V)\otimes \cdots \otimes V)\otimes V, \] 
is given by the  isomorphisms
\begin{align}
\si' = (a^{-1}_{V^{*(i-1)},V,V}&\otimes \id_{V^{*(n-(i+2))}})\circ \label{generadores}\\
(\id_{V^{i-1}}&\otimes c_{V,V}\otimes \id_{V^{*(n-(i+2))}})\circ \notag\\
 (&a_{V^{*(i-1)},V,V}\otimes \id_{V^{*(n-(i+2))}}), \notag
\end{align}where $a_{V,W,Z}$ denotes the associativity constraints. 

The associativity constraint is given by
\begin{align*}
    a_{V^{*(i-1)},V,V}(|x_1\rangle \cdots |x_n\rangle )= \omega^{-1}(g,b^k,b^k)|x_1\rangle \cdots |x_n\rangle,
\end{align*}where $g\in G(q,p;n)$.
Then, using \eqref{formula:braiding} we have that
\begin{align*}
    \si' (|x_1\rangle \cdots |x_n\rangle ) &=\omega^{-1}(g,b^k,b^k)\\
    &\times \omega(g,b^k,b^k)\\
    &\times \pi^s_k(b^k)|x_1\rangle \cdots |x_i\triangleright x_{i+1}\rangle | x_i\rangle|x_{i+2}\rangle\cdots |x_n\rangle\\
    &= \rho_n^q(\si)(|x_1\rangle \cdots |x_n\rangle ).
\end{align*}
\end{proof}


A \emph{coloring} of an oriented knot diagram by a quandle $X$ is a map $f: \A \to X$ from the set $\A$ of the over-arcs of the knot diagram to $X$ such that the image of the map satisfies the relation in Figure \ref{crossing} at each crossing:

\begin{figure}[ht]
\centering

$$\begin{tikzpicture}
\poscross{\phantom{b}a\phantom{b}}{a \vartriangleright b}{b}{a}
\end{tikzpicture}  \hspace{50pt}\begin{tikzpicture} 
\negcross{b}{\phantom{b}a\phantom{b}}{a}{a \vartriangleright b}
\end{tikzpicture}$$

\caption{The coloring rule}
\label{crossing}
\end{figure}

The number of colorings of an oriented knot diagram $L$ by a quandle $X$ is an knot invariant and will be denoted by $C_X(L)$.

\begin{theorem}
Let $\sigma_{i_1}^{\epsilon_1}\sigma_{i_2}^{\epsilon_2}\cdots \sigma_{i_h}^{\epsilon_h} \in \cB_n$, and $L=\widehat{\sigma}$ its closure. The invariant of the oriented framed link $L$ colored by the simple object $V(b^k,\pi_k^s)$ is
\begin{equation}
    q^{Wr(L)}C_{X_k}(L),
\end{equation}
where $Wr(L)$ is the writhe of $L$, $X_k$ is the Alexander quandle constructed in Example \ref{exmple of quandle} and $q=e^{\frac{2\pi i}{p^2}(sp+uk )k}$, which is the twist $\theta_{([b^k],\pi_k^s)}$ of the anyon $([b^k],\pi_k^s)$.
\end{theorem}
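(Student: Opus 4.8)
The plan is to feed the representation from Proposition~\ref{prop:represention braid} into the trace formula for discrete gauge theory invariants, peel off the scalar as a writhe factor, and recognize what remains as a count of quandle colorings.

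First I would record that, by Proposition~\ref{prop:represention braid}, the operator invariant of $\sigma=\sigma_{i_1}^{\epsilon_1}\cdots\sigma_{i_h}^{\epsilon_h}$ acting on $V(b^k,\pi_k^s)^{\otimes n}=\C[X_k]^{\otimes n}$ is $\rho_n^q(\sigma)$, where each positive generator acts by the quandle braiding $c^q(|x\rangle|y\rangle)=q\,|x\triangleright y\rangle|x\rangle$ with $q=\pi_k^s(b^k)=e^{\frac{2\pi i}{p^2}(sp+uk)k}$. As $q$ is a scalar it commutes past everything, so $\rho_n^q(\sigma_i)=q\,\rho_n^1(\sigma_i)$ and $\rho_n^q(\sigma_i^{-1})=q^{-1}\rho_n^1(\sigma_i^{-1})$, giving
$$\rho_n^q(\sigma)=q^{\sum_j \epsilon_j}\,\rho_n^1(\sigma)=q^{Wr(L)}\,\rho_n^1(\sigma),$$
since the writhe of the blackboard-framed closure $L=\widehat{\sigma}$ is precisely the sum of crossing signs $\sum_j\epsilon_j$. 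Here $\rho_n^1(\sigma)$ is the linearization of the set-theoretic braid action on the basis $X_k^n$ determined by $c(x,y)=(x\triangleright y,x)$, hence a permutation matrix in that basis.

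Next I would apply the trace formula recalled above from \cite[Sect.~5]{AC92}: for discrete gauge theory representations the colored link invariant is the ordinary matrix trace of the braid operator, so the invariant equals $\operatorname{Tr}\bigl(\rho_n^q(\sigma)\bigr)=q^{Wr(L)}\operatorname{Tr}\bigl(\rho_n^1(\sigma)\bigr)$. Because $\rho_n^1(\sigma)$ merely permutes the basis $X_k^n$, its trace counts the fixed points of that permutation. It therefore remains to identify these fixed points with the colorings enumerated by $C_{X_k}(L)$.

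Finally I would set up that bijection. Reading the braid from bottom to top, a tuple $(x_1,\dots,x_n)\in X_k^n$ is the assignment of colors to the incoming over-arcs; pushing it through each crossing via $(x,y)\mapsto(x\triangleright y,x)$ reproduces exactly the crossing relation of Figure~\ref{crossing}, so the colors emerging at the top are $\rho_n^1(\sigma)(x_1,\dots,x_n)$. The closure identifies the top and bottom endpoint of each strand, so a globally consistent coloring of $L$ is exactly a tuple fixed by the action. Hence $\operatorname{Tr}\bigl(\rho_n^1(\sigma)\bigr)=C_{X_k}(L)$, and combining the displays yields the invariant $q^{Wr(L)}C_{X_k}(L)$; the identification of $q$ with the twist $\theta_{([b^k],\pi_k^s)}$ is the computation of Section~\ref{twists}. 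The main obstacle is this last step: one must check that the set-theoretic braid action matches the coloring rule of Figure~\ref{crossing} on the nose---with the correct orientation and over/under conventions at each crossing---and that the top--bottom identification of the closure translates into the fixed-point condition itself rather than a conjugated or inverted variant. This is standard quandle theory, but the bookkeeping of conventions is exactly where the care is needed.
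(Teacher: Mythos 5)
Your proposal is correct and follows essentially the same route as the paper's proof: invoke the ordinary-trace property from \cite[Sect.~5]{AC92}, pull the scalar $q$ out of $\rho_n^q(\sigma)$ to get the factor $q^{\sum_j\epsilon_j}=q^{Wr(L)}$, and identify the trace of the remaining permutation representation with the coloring count $C_{X_k}(L)$. The only difference is that for this last identification the paper simply cites Gra\~na \cite{grana}, whereas you sketch the underlying fixed-point argument (trace of a permutation matrix counts fixed tuples, and fixed tuples of the braid action are exactly the colorings of the closure)---which is precisely the content of the cited result, so no gap remains.
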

\begin{proof}
Since the quantum trace of the category of representations of $D^{\omega_u}(G)$ is the usual trace (see \cite[Sect. 5]{AC92}), it follows from Proposition \ref{prop:represention braid} that the link invariant associated to the simple object $V(b^k,\pi_k^s)$ is the scalar $\operatorname{tr}(\rho_n^q(\sigma_{i_1}^{\epsilon_1}\sigma_{i_2}^{\epsilon_2}\cdots \sigma_{i_h}^{\epsilon_h}))$. Thus,
\begin{align*}
    \rho_n^q(\sigma_{i_1}^{\epsilon_1}\sigma_{i_2}^{\epsilon_2}\cdots \sigma_{i_h}^{\epsilon_h})&=\rho_n^q(\sigma_{i_1})^{\epsilon_1}\rho_n^q(\sigma_{i_2})^{\epsilon_2}\cdots \rho_n^q(\sigma_{i_h})^{\epsilon_h}\\
    &=(q^{\sum_i \epsilon_i})\rho_n(\sigma_{i_1})^{\epsilon_1}\rho_n(\sigma_{i_2})^{\epsilon_2}\cdots \rho_n(\sigma_{i_h})^{\epsilon_h}\\
    &= (q^{\sum_i \epsilon_i})\rho_n (\sigma_{i_1}^{\epsilon_1}\sigma_{i_2}^{\epsilon_2}\cdots \sigma_{i_h}^{\epsilon_h}).
\end{align*}Thus, 
\[\operatorname{tr}(\rho_n^q(\sigma_{i_1}^{\epsilon_1}\sigma_{i_2}^{\epsilon_2}\cdots \sigma_{i_h}^{\epsilon_h}))=(q^{\sum_i \epsilon_i})\operatorname{tr}(\rho_n(\sigma_{i_1}^{\epsilon_1}\sigma_{i_2}^{\epsilon_2}\cdots \sigma_{i_h}^{\epsilon_h})).\]
The writhe of $L$ is exactly $\sum_i\epsilon_i$ and it is well known that for a quandle $X$ $$\operatorname{tr}(\rho_n(\sigma_{i_1}^{\epsilon_1}\sigma_{i_2}^{\epsilon_2}\cdots \sigma_{i_h}^{\epsilon_h}))$$ is $C_X(L)$, the number of colorings, (see \cite{grana}).
\end{proof}

\begin{example}
The Borromean rings $L_B$ is the closure of $(\sigma_2\sigma_1^{-1})^3\in \cB_3$.  The invariant $C_{X_k}(L_B)\in \mathbb{N}$ and does not depend on $u, s\in \{0,1\ldots, p-1\}$. The same is true for the figure-eight knot, which is the closure of the
braid $(\sigma_1\sigma_2^{-1})^2$.
\end{example}

\subsection{$W$-matrix of MS-MC $\mcZ(\vgou)$ for $G(11,5;4)$}

There are three types of anyons in the MS-MC $\mcZ(\vgou)$ for $G(11,5;4)$: the $7$ anyons of type $I$, the $22$ anyons of type $A$, and the $20$ anyons of type $B$.  In order to show that the $W$-matrix can detect each of the five $\mcZ(\vgou)$, it is sufficient to show that there are no permutations mapping the $W$-matrices onto one another. This is because any braided tensor equivalence between the categories would restrict to a permutation of the anyons, and hence induce a permutation of any matrix of invariants. 

Suppose $\phi: \mathcal{Z}(\textbf{Vec}_G^{\omega^u}) \to \mathcal{Z}(\textbf{Vec}_G^{\omega^{u'}})$ were a braided tensor equivalence of categories. Then the modular data obeys $S_{\phi(a)\phi(b)}=S_{ab}$ and $T_{\phi(a)}=T_a$. That is, the modular data is invariant under the induced permutation on the anyons. An analysis of the modular data shows that there are three distinct sets up to permutation: $u=0$, $u=1,4$, and $u=2,3$. Thus it suffices to show that no permutations mapping the modular data of $u=1$ onto $u=4$ extend to permutations of the $W$-matrices between the same values of $u$ (and similarly for $u=2,3$). Then one can confirm that there are really five braided tensor equivalence classes of categories $\mcZ(\vgou)$, reproducing the result of Mignard-Schauenberg \cite{Mignard-Shauenburg}.

A simple inspection of the explicit $T$ matrix Table \ref{explicitT} shows that to identify the $T$ matrix for $u=1$ with $u=4$, $B_{1,0}$ of $u=1$ has to be sent to $B_{2,1}$ or $B_{3,1}$ of $u=4$.  We will show that that this permutation of anyons is not compatible with the $W$ matrix.  Therefore, there are no permutations of anyons between $u=1$ and $u=4$.  Actually something stronger is true; there are no permutations at all to match up the $W$-matrix for $u=1$ and $u=4$.  Since it is hard to show this fact analytically, an alternative proof using both $T$ and $W$ is given below.

\subsubsection{Proof that no permutation of the modular data extends to the $W$ matrices}\label{nomatch}
We prove that permutations taking $T^{(u=1)}$ to $T^{(u=4)}$ cannot simultaneously take the $W^{(u=1)}_{B,A}$ block to the $W^{(u=4)}_{B,A}$ block.

From Table (\ref{explicitT}), the topological spins $\theta=\xi_{25}^s$ can be calculated in the same order of anyons as in the table for $$u=1, s\in \{1,6,11,16,21;4,14,24,9,19;9,24,14,4,19;16,11,6,1,21\}$$ and for $$u=4, s\in  \{4,9,14,19,24;16,1,11,21,6;11,1,16,6,21;14,9,4,24,19\}.$$  Therefore, to match up the $T$ matrix, $B^{(u=1)}_{1,0}$ has to be sent to either $B^{(u=4)}_{2,1}$ or $B^{(u=4)}_{3,1}$.  Similarly, such a permutation has to send $A^{(u=1)}_{1,4}$ to $A^{(u=4)}_{1,4}$ or $A^{(u=4)}_{2,2}$.

\begin{proposition}
\begin{equation}\label{WABblock}
W^{(u)}_{B _{k,n}, A_{l,m}} = 55\chi_k(l,m) \left( \theta_{A_{l,m}}^{\eta(k)} \theta^{(u)}_{B_{k,n}}
\right)^{-1}
\end{equation}
$\chi_k(l,m) = (-1)^{lm[k^2]_5} =-1$ if $k=1$ or $k=4$ and $l=1$ and $m \in \{1,3,5,7,9\}$.  Otherwise,
$\chi_k(l,m)=+1$. $\eta(k) =\frac{1}{2}[k^2]_5 =\frac{1}{2}$ if $k=1$ or $k=4$ and $\eta(k) = 2$ if $k=2$ or $k=3$.
\end{proposition}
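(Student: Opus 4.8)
\emph{Strategy.} The plan is to compute the unsymmetrized invariant $\widetilde{W}_{B_{k,n},A_{l,m}}$ directly from the discrete gauge theory representation of the braid group built in Section 3, and then pass to $W$ via the twist prefactor $\theta_{B_{k,n}}/\theta_{A_{l,m}}$ of the Definition. First I would fix a $3$-strand braid word whose closure is the Whitehead link, color its two components by $B_{k,n}=([b^k],\pi_k^n)$ and $A_{l,m}=([a^l],\omega_{11}^m)$ respectively, and realize the operator invariant on the tensor product of the corresponding induced modules $V([b^k],\pi_k^n)$ and $V([a^l],\omega_{11}^m)$ using the explicit braiding scalars $\Omega_{\pm}$ and the associator $\phi=\sum\omega_u(\cdot,\cdot,\cdot)^{-1}P_ge\otimes P_he\otimes P_ke$. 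Because every centralizer irrep entering here is $1$-dimensional, each generator $c_{x,y}^{\pm1}$ acts as a phase times a permutation of the flux basis $\{|r_{i,t}\rangle\}$, so the whole braid operator is monomial and its ordinary trace (justified by \cite[Sect. 5]{AC92}) is a phase-weighted count of fixed flux configurations.

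\emph{Main computation.} The central step is to track how the two fluxes evolve along the braid. The $A$-component carries flux in $[a^l]\subset\Z_q$ and the $B$-component carries flux in $[b^k]$, and their interaction is governed by the conjugation $b^k a^{l}b^{-k}=a^{n^k l}$, i.e. by the Alexander quandle $X_k$ with $x\triangleright y=(1-n^k)x+n^k y$ of Example \ref{exmple of quandle}, exactly as in Proposition \ref{prop:represention braid}. Running the clasp, the $B$-flux evolution is dictated by $X_k$ while the $A$-flux contributes its character $\omega_{11}^m$ and the monodromy phase, and the associator contributes the factors $\omega_u(b^k,b^k,b^k)$; collecting the projective-character values $\pi_k^n$ and the $2$-cochains $\theta_{t}(x,y)$, $\gamma_h(x,y)$ produced at each crossing yields the scalar. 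I expect the outputs to be: the prefactor $55=D=d_{A_{l,m}}d_{B_{k,n}}$, arising because the surviving flux configurations sweep out all of $G$ once; the combined twist factor $\theta_{A_{l,m}}^{\eta(k)}\theta^{(u)}_{B_{k,n}}$, in which the exponent $\eta(k)=\tfrac12[k^2]_5$ records that the clasp threads the $A$-flux once through a loop of monodromy $n^k$ (so $k\in\{1,4\}$ give a genuine square root and $k\in\{2,3\}$ give the integer power $2$); and the residual sign $\chi_k(l,m)=(-1)^{lm[k^2]_5}$.

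\emph{Sign and exponent.} With the convention $(e^{2\pi i s/N})^{t}=e^{2\pi i st/N}$ fixed in the statement, the half-power $\theta_{A_{l,m}}^{1/2}=e^{\pi i lm/q}$ for $k\in\{1,4\}$ must be compared against the actual phase produced by the clasp, and the discrepancy is precisely $(-1)^{lm}=(-1)^{lm[k^2]_5}$; for $k\in\{2,3\}$ one has $[k^2]_5=4$, $\eta(k)=2$ is an integer, and no sign appears. Assembling $\widetilde{W}_{B_{k,n},A_{l,m}}$ this way, then multiplying by $\theta_{B_{k,n}}/\theta_{A_{l,m}}$ and inserting $\theta_{A_{l,m}}=e^{2\pi i lm/q}$ and $\theta^{(u)}_{B_{k,n}}=e^{2\pi i nk/p}e^{2\pi i k^2 u/p^2}$ from Section \ref{twists}, produces \eqref{WABblock}.

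\emph{Main obstacle.} The hard part will be the bookkeeping: carrying the associator $3$-cocycle $\omega_u$ together with the induced $2$-cochains $\theta_t$, $\gamma_h$ and the projective characters through the full clasp without sign or phase errors, since the two fluxes do not commute and the braid is not a power of a single generator. In particular, isolating the half-integer exponent $\eta(k)$ and fixing $\chi_k(l,m)$ unambiguously, rather than up to an overall root ambiguity, is the delicate point; a clean way to pin it down is to impose the symmetry $\theta_a^2\widetilde{W}_{ax}=\theta_x^2\widetilde{W}_{x\bar a}$ of Proposition \ref{Wmatrixproperties}(1) as a rigidifying constraint on the choice of root, and to cross-check against the independently computed entries of the $W$-matrix in Appendix \ref{AppendixB} for a few values of $(k,l,m)$.
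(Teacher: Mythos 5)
Your strategy is in fact the same route the paper takes: present the Whitehead link as the closure of a $3$-braid (the paper uses $\sigma_2^{-2}\sigma_1^{-1}\sigma_2^2\sigma_1^2$, resp.\ $\sigma_2^{-2}\sigma_1\sigma_2^{-1}\sigma_1$; see Appendix \ref{AppendixB}), act on the tensor product of the induced modules through the scalars $\Omega_{\pm}$ and the associator of Section 3, and take the ordinary linear-algebra trace, which is legitimate by \cite[Sect.~5]{AC92}. Your structural observations are also correct: the relevant centralizer irreps ($\omega_{11}^m$ for the $A$'s, the projective characters $\pi_k^n$ for the $B$'s) are all one-dimensional, so the braid operator is monomial and its trace is a phase-weighted count of fixed flux configurations, with the flux dynamics governed by the Alexander quandle of Example \ref{exmple of quandle} exactly as in Proposition \ref{prop:represention braid}. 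Be aware, however, that the paper itself offers no analytic proof of this proposition: the entries are ``calculated by computers'' and the closed formula \eqref{WABblock} is extracted from, and justified only by, those machine computations.

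The genuine gap is that your proposal never performs the computation that constitutes the proof. The three nontrivial claims --- that the contributing flux configurations produce exactly the factor $55$, that the accumulated $A$-side phase is exactly $\theta_{A_{l,m}}^{-\eta(k)}$ with $\eta(k)=\tfrac12[k^2]_5$, and that the residual sign is $(-1)^{lm[k^2]_5}$ --- are presented as ``expected outputs'' of the clasp; but these \emph{are} the content of the proposition, so asserting them is not deriving them. Worse, your proposed way of pinning down the root ambiguity and the sign, namely cross-checking against ``the independently computed entries of the $W$-matrix in Appendix \ref{AppendixB},'' is circular: those tables are precisely the computer evaluation of the quantities that \eqref{WABblock} describes, so at that point your argument collapses back onto the paper's own non-analytic verification. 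To close the gap you would have to write out the monomial matrix of the chosen braid word explicitly on the full flux space of the three colored strands (two strands carry one color, one the other, since the closure has two components), tracking the quandle action, the characters $\pi_k^n$ and $\omega_{11}^m$, and the cochains $\theta_t$, $\gamma_h$, $\omega_u$ crossing by crossing, and then sum the phases over the fixed configurations. The symmetry $\theta_a^2\widetilde{W}_{ax}=\theta_x^2\widetilde{W}_{x\bar a}$ of Proposition \ref{Wmatrixproperties}(1) can serve as a consistency check on the outcome, but it relates different entries of $\widetilde{W}$ to one another and so cannot determine any single entry; it is not a substitute for the computation.
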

Our convention for taking roots is $(e^{\frac{2\pi i s}{N}})^{t} = e^{\frac{2\pi i st}{N}}$. All full $W^{(u)},u=0,1,2,3,4,$ matrices are known (see Appendix \ref{AppendixB}), but only those entries are provided because the remaining ones are not used anywhere.

From the modular $T$ matrix Table (\ref{explicitT}), it is found that between $u=1$ and $u=4$,
\begin{align}
	B_{1,0}^{(1)} \to
	\begin{cases}
		B_{2,1}^{(4)}\\
		B_{3,1}^{(4)}
	\end{cases} \quad \text{and} \quad
	A_{1,4}^{(1)} \to
	\begin{cases}
		A_{1,4}^{(4)}\\
		A_{2,2}^{(4)}
	\end{cases}
	.
\end{align}

Now consider the permutation of $B_{1,0}^{(1)}$ to $B_{2,1}^{(4)}$ or $B_{3,1}^{(4)}$ in the $W^{(u)}_{B,A}$ block. This will induce a permutation of $A_{1,4}^{(1)}$ to some $A_{l,m}^{(4)}$. That is, we wish to find $l,m$ such that
\begin{align}\label{W1BA}
	W^{(1)}_{B_{1,0}, A_{1,4}} & = \begin{cases}
		W^{(4)}_{B_{2,1}, A_{l,m}} \\
		W^{(4)}_{B_{3,1}, A_{l,m}}
	\end{cases}.
\end{align}

Using Eq. (\ref{WABblock}), we obtain:
\begin{align}
	W^{(1)}_{B_{1,0}, A_{1,4}} &= 55\left(\theta_{A_{1,4}}^{\frac{1}{2}} \theta^{(1)}_{B_{1,0}} \right)^{-1}  = 
	\begin{cases}
		W^{(4)}_{B_{2,1}, A_{l,m}} &=55 \left(\theta_{A_{l,m}}^2 \theta^{(4)}_{B_{2,1}}\right)^{-1} \\
		W^{(4)}_{B_{3,1}, A_{l,m}} &=55 \left(\theta_{A_{l,m}}^2 \theta^{(4)}_{B_{3,1}} \right)^{-1}
	\end{cases}.
\end{align}

The $\theta_B$'s cancel out since they come from the permutations of $T$ and the above relation leads to the possible solutions for $l,m$.
\begin{align}
	\theta_{A_{1,4}}^{-\frac{1}{2}} = \exp\left( \frac{-2\pi i}{11} 1\cdot 4 \cdot \frac{1}{2}\right) &=
	\exp\left(\frac{-2\pi i}{11} l \cdot m \cdot 2\right) = \theta_{A_{l,m}}^{-2} .
\end{align}
Therefore, $W_{B,A}^{(u)}$ requires
\begin{equation}\boxed{
	A_{1,4}^{(1)} \to A_{1,1}^{(4)} \textrm{ or } A_{2,6}^{(4)}
}\end{equation}

But this does not match up with the permutations that $T$ would allow:
\begin{equation}\label{P1to2}
A_{1,4}^{(1)} \to A_{1,4}^{(4)} \textrm{ or } A_{2,2}^{(4)}.
\end{equation}

The contradiction proves that there are no permutations of anyons of $u=1$ and $u=4$ that would match up both the $T$ and $W$ matrices. A similar result holds for permutations between $u=2$ and $u=3$. Thus the $W$-matrices in addition to the modular data $(S,T,W)$ distinguish the $\mcZ(\vgou)$. Alternatively one can check from the $W$-matrices directly that they cannot be related by any permutation of the anyons.

\subsection{MS-MCs $\mcZ(\vgou)$ as gauging $\Z_p$ symmetries of $D(\Z_{q})$}

The MS-MCs can also be obtained as gauging the $G(q,p;n)$ symmetry of $\textrm{Vec}$.  By the sequentially gauging lemma in \cite{cui16}, they are also the gaugings of $\Z_p$ symmetries of $\mcZ(\Vec_{\Z_q})$, denoted as $\mathcal{D}(\Z_q)$ in the physics literature.

For $q=11,p=5$, the anyons of $\mathcal{D}(\Z_q)$ are labeled as $a=(a_1,a_2)$ and the $F$ matrices are $F^{abc}=1$ if admissible and $0$ otherwise.  The $R$ symbols are $R^{ab}=e^{\frac{2\pi i}{11}a_1b_2}$.  The $\Z_5$ symmetry acts on anyons as $n.(a_1,a_2)=([9^na_1]_{11},[5^na_2]_{11}), n\in \Z_5$.  In the $G$-crossed extension, there is one defect in each sector, denoted as $X_m, m=1,2,3,4$.  The fusion rules of defects are:
$$X_mX_{5-m}=\sum_{a\in \mathcal{D}(\Z_{11})}a, aX_m=X_m, X_mX_n=11X_{[m+n]_{11}}\; \textrm{if}\; m+n\neq 0 \mod 5 .$$

The $\Z_5$ action fixes all defects, $\uno$ of $\mathcal{D}(\Z_q)$, and has $24$ orbits of size $5$.  Therefore, in the gauged theory, there are $4\times 5=20$ anyons of quantum dimension=$11$ from the defects, $24$ anyons of quantum dimensions=$5$ from the $24$ orbits, and $5$ abelian anyons.  Another way to see that the MS-MCs are different would be to show that the $5$ $G$-crossed extensions are also different.  We will leave this approach to the future.

\section{Systematic search}

It would be interesting to search through all knots and links systematically to find those links whose invariants are beyond the modular data.   We have started such a program and will leave the results for a future publication \cite{SSDT}.  After finishing this manuscript, we learned of an independent project to search for link invariants that are not determined by modular data \cite{PSprivate}.

Such a systematic search will shed light on the candidates of a complete set of invariants for modular categories.  A good test for whether or not a set $\mathcal{L}_C$ of link invariants would be complete is whether or not the eigenvalues of the punctured $S$-matrices and the triple $(S,T,W)$ are determined by the set $\mathcal{L}_C$.


\appendix
\section{$T$-matrices}
\begin{equation}\label{explicitT}
\scalemath{0.90}{
	\begin{array}{c|cc}
	\textrm{Label} & d & \theta \\ \hline \hline
	I_0 & 1 & 1\\
	I_1 & 1 & 1\\
	I_2 & 1 & 1\\
	I_3 & 1 & 1\\
	I_4 & 1 & 1\\
	I_5 & 5 & 1\\
	I_6 & 5 & 1\\ \hline
	A_{1,0} & 5 & 1\\
	A_{1,1} & 5 & \exp(\frac{i2\pi}{11})\\
	A_{1,2} & 5 & \exp(\frac{i2\pi}{11}2)\\
	A_{1,3} & 5 & \exp(\frac{i2\pi}{11}3)\\
	A_{1,4} & 5 & \exp(\frac{i2\pi}{11}4)\\
	A_{1,5} & 5 & \exp(\frac{i2\pi}{11}5)\\
	A_{1,6} & 5 & \exp(\frac{i2\pi}{11}6)\\
	A_{1,7} & 5 & \exp(\frac{i2\pi}{11}7)\\
	A_{1,8} & 5 & \exp(\frac{i2\pi}{11}8)\\
	A_{1,9} & 5 & \exp(\frac{i2\pi}{11}9)\\
	A_{1,10} & 5 & \exp(\frac{i2\pi}{11}10)\\ \hline
	A_{2,0} & 5 & 1\\
	A_{2,1} & 5 & \exp(\frac{i4\pi}{11})\\
	A_{2,2} & 5 & \exp(\frac{i4\pi}{11}2)\\
	A_{2,3} & 5 & \exp(\frac{i4\pi}{11}3)\\
	A_{2,4} & 5 & \exp(\frac{i4\pi}{11}4)\\
	A_{2,5} & 5 & \exp(\frac{i4\pi}{11}5)\\
	A_{2,6} & 5 & \exp(\frac{i4\pi}{11}6)\\
	A_{2,7} & 5 & \exp(\frac{i4\pi}{11}7)\\
	A_{2,8} & 5 & \exp(\frac{i4\pi}{11}8)\\
	A_{2,9} & 5 & \exp(\frac{i4\pi}{11}9)\\
	A_{2,10} & 5 & \exp(\frac{i4\pi}{11}10)\\ \hline
	B_{1,0} & 11 & \exp(\frac{i2\pi}{25} 1^2 u) \\
	B_{1,1} & 11 & \exp(\frac{i2\pi}{25}(5\cdot 1\cdot 1+1^2 u)) \\
	B_{1,2} & 11 & \exp(\frac{i2\pi}{25}(5\cdot 1\cdot 2+1^2 u)) \\
	B_{1,3} & 11 & \exp(\frac{i2\pi}{25}(5\cdot 1\cdot3+1^2 u)) \\
	B_{1,4} & 11 & \exp(\frac{i2\pi}{25}(5\cdot 1\cdot4+1^2 u)) \\ \hline
	B_{2,0} & 11 & \exp(\frac{i2\pi}{25} 2^2 u) \\
	B_{2,1} & 11 & \exp(\frac{i2\pi}{25}(5\cdot 2\cdot1+2^2 u)) \\
	B_{2,2} & 11 & \exp(\frac{i2\pi}{25}(5\cdot 2\cdot2+2^2 u)) \\
	B_{2,3} & 11 & \exp(\frac{i2\pi}{25}(5\cdot 2\cdot3+2^2 u)) \\
	B_{2,4} & 11 & \exp(\frac{i2\pi}{25}(5\cdot 2\cdot4+2^2 u)) \\ \hline
	B_{3,0} & 11 & \exp(\frac{i2\pi}{25} 3^2 u) \\
	B_{3,1} & 11 & \exp(\frac{i2\pi}{25}(5\cdot 3\cdot1+3^2 u)) \\
	B_{3,2} & 11 & \exp(\frac{i2\pi}{25}(5\cdot 3\cdot2+3^2 u)) \\
	B_{3,3} & 11 & \exp(\frac{i2\pi}{25}(5\cdot 3\cdot3+3^2 u)) \\
	B_{3,4} & 11 & \exp(\frac{i2\pi}{25}(5\cdot 3\cdot4+3^2 u)) \\ \hline
	B_{4,0} & 11 & \exp(\frac{i2\pi}{25} 4^2 u) \\
	B_{4,1} & 11 & \exp(\frac{i2\pi}{25}(5\cdot 4\cdot1+4^2 u)) \\
	B_{4,2} & 11 & \exp(\frac{i2\pi}{25}(5\cdot 4\cdot2+4^2 u)) \\
	B_{4,3} & 11 & \exp(\frac{i2\pi}{25}(5\cdot 4\cdot3+4^2 u)) \\
	B_{4,4} & 11 & \exp(\frac{i2\pi}{25}(5\cdot 4\cdot4+4^2 u)) \\ 
	\end{array}}
\end{equation}

\section{W-matrices}\label{AppendixB}
The Whitehead link is presented as the closure of the $3$-strand braid $b_7$ with braid word $\sigma_2^{-2} \sigma_1^{-1} \sigma_2^2 \sigma_1^2$ or $b_5=\sigma_2^{-2} \sigma_1 \sigma_2^{-1} \sigma_1$.  The braid $b_5$ represent our Whitehead link with an extra twist, and $b_7$ its mirror image with an extra twist.

The $W$-matrices are too large to be included in here, so instead we post them on the following two websites.

\begin{itemize}
    \item  \url{http://web.physics.ucsb.edu/~adtran/W.html}
\item \url{http://web.math.ucsb.edu/~cdelaney/WMatrices.html}
\end{itemize}


\end{document}